%% file: article8.tex
\documentclass[11pt,twoside]{article}

\newcommand{\HeadTitle}{Beta Analogues and Lerch Unification}

\newcommand{\HeadTitleTwo}{\begin{center}
\Large{\textit{Dirichlet Beta Analogues, Sign Alternation, and Lerch Unification of Hyperbolic and Logarithmic Tangent Integrals}}
\end{center}}

\input{preamble}
\input{font}

\begin{document}

\input{titlepage}

\begin{abstract}
We develop a Dirichlet-beta counterpart and a structural extension of
the coefficient-extraction framework for hyperbolic and logarithmic
tangent integrals.  For integers $m\geq n\geq1$ with $m+n$ even,
the zeta-type formula
\[
\int_0^\infty\frac{\tanh^{m+1}x}{x^{n+1}}\,dx
=
(-1)^{(m-n)/2}
\sum_{p=\lceil n/2\rceil}^{(m+n)/2}
\binom{2p}{n}(2^{2p+1}-1)
\frac{\zeta(2p+1)}{\pi^{2p}}
[u^{m+n-2p}](u\cot u)^{m+1}
\]
admits the beta-type analogue
\[
\int_0^\infty
\frac{\tanh^m x}{x^n\cosh x}\,dx
=
(-1)^{(m-n)/2}
\sum_{p=\lceil n/2\rceil}^{(m+n)/2}
2^{2p}\binom{2p-1}{n-1}
\frac{\beta(2p)}{\pi^{2p-1}}
[u^{m+n-2p}]
\frac{u}{\sin u}(u\cot u)^m.
\]
The proof uses derivative polynomials, a beta kernel, and a
coefficient-collapse argument.  We then establish strict sign
alternation for several coefficient families.  In the boundary
shifted-hyperbolic case the sign is controlled by generalized Bernoulli
polynomials, while the arctanh and logarithmic tangent families are
identified with continuous dual Hahn and continuous Hahn polynomials.
Their zero distributions turn the observed alternation into structural
statements valid for all admissible indices.

Finally, with $\lambda(s)=(1-2^{-s})\zeta(s)$, we show that the
parallel zeta--lambda and beta identities are the two binary
specializations of a single Lerch-transcendent scheme with
$\varepsilon\in\{0,1\}$.  The same convention
$\varepsilon=0$ for the beta case and $\varepsilon=1$ for the
zeta--lambda case is used throughout.  Besides the shifted hyperbolic,
odd-$\sinh$, logarithmic tangent, and $\tanh$ families, this
framework yields a further unified pair of reciprocal-arctanh integral
formulae on $(0,1)$.
\end{abstract}

\vspace{0.2cm}

\paragraph{Notation.}
We write \(\mathbb N=\{1,2,\ldots\}\).  The Riemann zeta, Dirichlet beta,
and Dirichlet lambda functions are denoted by \(\zeta(s)\), \(\beta(s)\),
and \(\lambda(s)\), respectively, with
\[
\beta(s)=\sum_{j=0}^{\infty}\frac{(-1)^j}{(2j+1)^s},
\qquad
\lambda(s)=\sum_{j=0}^{\infty}\frac{1}{(2j+1)^s}
          =(1-2^{-s})\zeta(s).
\]
For the Dirichlet lambda function, see \cite{HuKim2019}.
The Lerch transcendent is denoted by
\(\Phi(q,s,a)=\sum_{j=0}^{\infty}q^j/(j+a)^s\) in its domain of
convergence; see \cite[Eq.~(25.14.1)]{NIST:DLMF}.  We use
\((a)_r\) for the rising factorial.  The Chebyshev polynomial of the second kind
\cite{MasonHandscomb2003,NIST:DLMF} is denoted by \(U_m\) and normalized by
\[
U_m(\cos\theta)=\frac{\sin((m+1)\theta)}{\sin\theta}.
\]
For a power series or Laurent series \(F\), the notation \([x^r]F(x)\)
denotes the coefficient of \(x^r\) in the expansion at the origin.
Generalized Bernoulli polynomials and the continuous Hahn and continuous
dual Hahn polynomials are used with the normalizations cited at their
first occurrence.  In the Lerch-unification section,
\(\varepsilon\in\{0,1\}\) is used consistently as a binary parameter,
with \(\varepsilon=0\) selecting the beta-type formula and
\(\varepsilon=1\) the zeta--lambda-type formula.

\vspace{0.5cm}

\section*{Introduction}

Integrals built from powers of hyperbolic functions, singular factors
\(x^{-n}\), and logarithmic tangent kernels often reduce to finite linear
combinations of special values of the Riemann zeta and Dirichlet beta
functions.  Recursive coefficient descriptions for related unshifted
hyperbolic kernels were obtained by Kyrion \cite{Kyrion2025}, while
contour representations underlying shifted hyperbolic and logarithmic
tangent families appear in \cite{talla_waffo_integral_2025}.  Related
logarithmic and hyperbolic-tangent integrals have also been studied by
contour methods in \cite{ReynoldsStauffer2020}.

\vspace{0.2cm}

A coefficient-extraction viewpoint was developed in
\cite{TallaWaffo2026}: coefficients in several zeta- and beta-expansions
were encoded by a single extraction involving powers of \(\arcsin\),
Chebyshev polynomials, or the elementary kernel \(u\cot u\).  In
particular, that paper established the zeta-type identity
\[
\int_0^\infty\frac{\tanh^{m+1}x}{x^{n+1}}\,dx
=
(-1)^{(m-n)/2}
\sum_{p=\lceil n/2\rceil}^{(m+n)/2}
\binom{2p}{n}(2^{2p+1}-1)
\frac{\zeta(2p+1)}{\pi^{2p}}
[u^{m+n-2p}](u\cot u)^{m+1}.
\]
The first purpose of the present paper is to complete this picture on the
beta side.  We prove the companion formula
\[
\int_0^\infty
\frac{\tanh^m x}{x^n\cosh x}\,dx
=
(-1)^{(m-n)/2}
\sum_{p=\lceil n/2\rceil}^{(m+n)/2}
2^{2p}\binom{2p-1}{n-1}
\frac{\beta(2p)}{\pi^{2p-1}}
[u^{m+n-2p}]
\frac{u}{\sin u}(u\cot u)^m,
\]
for \(m\geq n\geq1\) and \(m+n\) even.  The additional factor
\(u/\sin u\) is the natural beta counterpart of the cotangent kernel;
it emerges from a beta-type hyperbolic integral after repeated
integration by parts and a coefficient collapse.

\vspace{0.2cm}

The second purpose is structural.  Several coefficient families that
initially appear unrelated exhibit strict sign alternation.  For the
boundary shifted-hyperbolic family we reduce the relevant coefficients
to generalized Bernoulli polynomials.  For the arctanh and logarithmic
tangent families, the coefficient-generating polynomials are identified
with continuous dual Hahn and continuous Hahn polynomials.  Positivity
of the corresponding orthogonality measures and the location of their
zeros then force the required sign patterns.

\vspace{0.2cm}

The third purpose is to expose the common zeta--beta mechanism.  The
Dirichlet lambda function satisfies
\(2^s\lambda(s)=(2^s-1)\zeta(s)\), while at \(a=\frac12\) the Lerch
transcendent contains both beta and lambda values:
\[
\Phi\!\left(-1,s,\frac12\right)=2^s\beta(s),
\qquad
\Phi\!\left(1,s,\frac12\right)=2^s\lambda(s).
\]
Consequently, the beta and zeta--lambda identities can be written as two
specializations of one binary Lerch formula.  We use the convention
\[
\varepsilon=0\quad\text{for the beta case},
\qquad
\varepsilon=1\quad\text{for the zeta--lambda case}
\]
throughout the unification.  In addition to compressing the shifted
hyperbolic, odd-\(\sinh\), logarithmic tangent, and \(\tanh\) pairs, this
viewpoint leads to a unified pair of reciprocal-arctanh integrals.

\vspace{0.2cm}

For clarity about the dependence on \cite{TallaWaffo2026}, the
Chebyshev--arcsine coefficient formulae for the shifted hyperbolic,
odd-\(\sinh\), and logarithmic tangent families, together with the
zeta-type \(\tanh\) formula, are taken from that paper.  The new results
proved here are the beta-type \(\tanh\) formula, the sign-alternation
theorems, the systematic Lerch compression, and the reciprocal-arctanh
pair derived at the end of the paper.

\vspace{0.2cm}

The paper is organized as follows.  \Cref{sec:coefficient-formulae}
proves the beta-type \(\tanh\) formula and records explicit examples.
\Cref{sec:sign_alternation} establishes strict alternation in three
coefficient families.  \Cref{sec:lerch-unification} develops the binary
Lerch formulation and derives the reciprocal-arctanh consequence. 

\vspace{0.3cm}

\section{Coefficient Formula for $\displaystyle\int_0^\infty
\frac{\tanh^m x}{x^n\cosh x}\,dx$}\label[section]{sec:coefficient-formulae}

The main result of this section is the beta-type companion of the
cotangent coefficient formula recalled in the introduction.  The proof
is arranged so that the origin of each factor remains visible.  We first
encode derivatives of \(\tanh^m x/\cosh x\) by a polynomial recurrence,
then insert the beta kernel inherited from \cite{TallaWaffo2026}, and
finally collapse the resulting finite coefficient sum to the single
kernel \(\dfrac{u}{\sin u}(u\cot u)^m\).  The two examples at the end
illustrate how the general formula produces explicit finite combinations
of even Dirichlet beta values.

\begin{lemma}[Derivative-polynomial structure]
\label[lemma]{lem:sech-tanh-derivative-polynomial}
Let \(m\geq1\) be an integer and define
\(f_m(x):=\tanh^m x/\cosh x\).
For every integer \(r\geq0\), there exists a polynomial \(P_{m,r}\)
such that
\[
f_m^{(r)}(x)
=
\frac{1}{\cosh x}P_{m,r}(\tanh x).
\]
The polynomials satisfy \(P_{m,0}(t)=t^m\) and
\[
P_{m,r+1}(t)
=
(1-t^2)P_{m,r}'(t)-tP_{m,r}(t).
\]
If \(0\leq r\leq m\), then the smallest and largest powers occurring
in \(P_{m,r}\) are \(t^{m-r}\) and \(t^{m+r}\), respectively,
and only powers of the same parity occur. In particular, if \(m+r\)
is odd, then
\[
P_{m,r}(t)
=
\sum_{k=(m-r-1)/2}^{(m+r-1)/2}
a_{m,r,k}\,t^{2k+1}
\]
for suitable coefficients \(a_{m,r,k}\).
\end{lemma}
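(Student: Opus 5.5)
Induction on \(r\), with the chain rule doing all the work. Set \(t=\tanh x\), so that \(\frac{d}{dx}\tanh x=1-t^2\) and \(\frac{d}{dx}\operatorname{sech}x=-\operatorname{sech}x\tanh x\). For \(r=0\) the representation holds with \(P_{m,0}(t)=t^m\). Now assume \(f_m^{(r)}(x)=\operatorname{sech}x\,P_{m,r}(\tanh x)\). Differentiating gives
\[
f_m^{(r+1)}(x)=\operatorname{sech}x\bigl[(1-t^2)P_{m,r}'(t)-tP_{m,r}(t)\bigr]_{t=\tanh x}.
\]
This is the stated recurrence. Polynomiality of \(P_{m,r+1}\) then follows from that of \(P_{m,r}\).

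For the degree and parity structure, apply the recurrence to a single monomial:
\[
t^j\longmapsto jt^{j-1}-(j+1)t^{j+1}.
\]
Each monomial of \(P_{m,r}\) therefore spawns only powers of the opposite parity, shifted by \(\pm1\). By induction, \(P_{m,r}\) contains only powers \(\equiv m+r \pmod 2\), lying between \(t^{m-r}\) and \(t^{m+r}\).

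It remains to show that the extreme powers actually occur, i.e.\ have nonzero coefficients, for \(0\le r\le m\).

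\emph{Top coefficient.} The top coefficient of \(t^{m+r}\) is \(c_r=(-1)^r(m+1)(m+2)\cdots(m+r)\). It is updated only through the term \(-(j+1)t^{j+1}\) with \(j=m+r\), so \(c_{r+1}=-(m+r+1)c_r\), which never vanishes.

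\emph{Bottom coefficient.} The bottom coefficient \(b_r\) of \(t^{m-r}\) evolves through the term \(jt^{j-1}\) with \(j=m-r\). This gives \(b_{r+1}=(m-r)b_r\), hence \(b_r=m(m-1)\cdots(m-r+1)=m!/(m-r)!\). This is nonzero precisely when \(r\le m\), which is exactly where the hypothesis \(r\le m\) is needed. One small point deserves care: no other monomial of \(P_{m,r}\) can contribute to \(t^{m-r-1}\), since all other powers are at least \(m-r+2\) and hence map to powers at least \(m-r+1\). Similarly, nothing else reaches \(t^{m+r+1}\). These two boundary checks are the only subtle step, and they are routine.

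Finally, suppose \(m+r\) is odd. Then all exponents are odd, say \(2k+1\), ranging from \(m-r\) to \(m+r\). This gives \(k\) from \((m-r-1)/2\) to \((m+r-1)/2\), which is the displayed form with \(a_{m,r,k}\) defined as the corresponding coefficients.
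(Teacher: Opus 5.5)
Your proof is correct and follows essentially the same route as the paper. Both use the chain rule with $t=\tanh x$ to get the recurrence, then an induction on the extreme coefficients, which are multiplied by $m-r$ (bottom) and $-(m+r+1)$ (top) at each step; your monomial map $t^j\mapsto jt^{j-1}-(j+1)t^{j+1}$ and the closed forms $m!/(m-r)!$ and $(-1)^r(m+1)\cdots(m+r)$ simply spell out the paper's brief induction in more detail.
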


\begin{proof}
Put \(t=\tanh x\). Since \(dt/dx=1-t^2\) and
\(\dfrac{d}{dx}(\cosh^{-1}x)=-t\cosh^{-1}x\), we obtain
\[
\frac{d}{dx}
\left(\frac{P(t)}{\cosh x}\right)
=
\frac{(1-t^2)P'(t)-tP(t)}{\cosh x},
\]
which proves the recurrence.

Starting from \(P_{m,0}(t)=t^m\), induction shows that
\(P_{m,r}\) contains only powers
\(t^{m-r},t^{m-r+2},\ldots,t^{m+r}\).
At each step the coefficient of the lowest power is multiplied by
\(m-r\), while that of the highest power is multiplied by
\(-(m+r+1)\). Thus both endpoint coefficients are nonzero for
\(0\leq r\leq m\).
\end{proof}

\begin{lemma}[Beta kernel]
\label[lemma]{lem:beta-kernel}
For every integer \(k\geq0\),
\[
\int_0^\infty
\frac{\sinh^{2k+1}x}{x\cosh^{2k+2}x}\,dx
=
\sum_{p=1}^{k+1}
2^{2p}\frac{\beta(2p)}{\pi^{2p-1}}
[z^{2k+1}]
(z^2-1)^k(\arcsin z)^{2p-1},
\]
where
\(\beta(s):=\sum_{j=0}^{\infty}(-1)^j/(2j+1)^s\)
is the Dirichlet beta function.
\end{lemma}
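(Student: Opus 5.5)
The plan is to go through the Fourier sine transform. The integrand is
\[
g_k(x):=\frac{\sinh^{2k+1}x}{\cosh^{2k+2}x}=\frac{\tanh^{2k+1}x}{\cosh x}=f_{2k+1}(x),
\]
an odd, exponentially decaying function. Its sine transform is
\[
S_k(\omega):=\int_0^\infty g_k(x)\sin(\omega x)\,dx .
\]
Using $\int_0^\infty\sin(\omega x)\,d\omega=1/x$ in the Abel sense, Fubini gives
\[
\int_0^\infty\frac{g_k(x)}{x}\,dx=\int_0^\infty S_k(\omega)\,d\omega .
\]
To justify the interchange I would insert a factor $e^{-\delta\omega}$, compute both sides, and let $\delta\to0^+$. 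The point of this route is that the $\omega$-integral of an odd power times a hyperbolic secant produces \emph{even} beta values:
\[
\int_0^\infty\omega^{2p-1}\operatorname{sech}\!\Big(\frac{\pi\omega}{2}\Big)\,d\omega
=2\Big(\frac{2}{\pi}\Big)^{2p}(2p-1)!\,\beta(2p).
\]

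\textbf{Step 1: $S_k$ is a polynomial times a secant.} I would show
\[
S_k(\omega)=\frac{\pi}{2}\operatorname{sech}\!\Big(\frac{\pi\omega}{2}\Big)\,Q_k(\omega)
\]
for an odd polynomial $Q_k$ of degree $2k+1$. Two routes are available.

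First, by \Cref{lem:sech-tanh-derivative-polynomial} (with the polynomial recurrence read backwards), $g_k$ is a finite linear combination of the odd derivatives $\operatorname{sech}^{(2j+1)}$ for $0\le j\le k$. Since $\int_{\mathbb R}e^{i\omega x}\operatorname{sech}x\,dx=\pi\operatorname{sech}(\pi\omega/2)$, integration by parts turns each derivative into a factor $\pm\omega^{2j+1}$.

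Alternatively, and more usefully for identifying $Q_k$, start from the Fourier transform of $\cosh(ax)/\cosh x$, which equals $\pi\cos(\pi a/2)\big/\big(\cos\pi a+\cosh\pi\omega\big)$ up to normalization. Then generate $\sinh^{2k+1}x/\cosh^{2k+2}x$ from the parametrized family $\sinh(ax)/\cosh^{b}x$ and track how the parameters enter.

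Once Step 1 is in place, substituting into $\int_0^\infty S_k$ gives
\[
\int_0^\infty\frac{g_k(x)}{x}\,dx=\sum_{p=1}^{k+1}[\omega^{2p-1}]Q_k(\omega)\cdot\pi(2/\pi)^{2p}(2p-1)!\,\beta(2p).
\]
This already has the claimed shape $\sum_p 2^{2p}\beta(2p)\pi^{1-2p}\times(\text{rational})$.

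\textbf{Step 2: the main obstacle.} It remains to prove the coefficient identity
\[
(2p-1)!\,[\omega^{2p-1}]Q_k(\omega)=[z^{2k+1}](z^2-1)^k(\arcsin z)^{2p-1}.
\]
My first attempt would use generating functions. Summing $\sum_p c_p\,\theta^{2p-1}/(2p-1)!$ against the right-hand side with $z=\sin\theta$ gives
\[
\sum_{q}\frac{\theta^{q}}{q!}[z^{2k+1}](z^2-1)^k(\arcsin z)^{q}=[z^{2k+1}](z^2-1)^k e^{\theta\arcsin z}.
\]
The right-hand side is a Chebyshev-type coefficient. Concretely, $e^{\theta\arcsin z}$ has a classical hypergeometric expansion in $z$ whose coefficients are products $\prod(\theta^2+(2i)^2)$ or $\prod(\theta^2+(2i+1)^2)$. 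On the other side, $Q_k(\omega)$ evaluated at $\omega=\theta$ should be expressible through the same Pochhammer-type products $\prod(\omega^2+(2i+1)^2)$. These arise naturally from repeatedly applying the operator $(1-t^2)\frac{d}{dt}-t$ to $t^{2k+1}$, which multiplies endpoint coefficients by linear factors.

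I would therefore compute $Q_k$ explicitly as a finite hypergeometric sum in $\omega$, for example via a Cauchy coefficient integral around $z=0$ after the substitution $z=\sin\theta$. I would then match it termwise with $[z^{2k+1}](z^2-1)^k e^{\theta\arcsin z}$ by expanding $(z^2-1)^k$ binomially. If direct matching becomes messy, the fallback is to check that both sides satisfy the same three-term recurrence in $k$ with the same initial value at $k=0$. The base case $k=0$ is $\int_0^\infty\tanh x\,\operatorname{sech}x\,dx/x=4\beta(2)/\pi$, which is the known value.
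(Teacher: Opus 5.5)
Your reduction is sound as far as it goes. Step 1 is correct, because the polynomials \(P_{0,2j+1}\) have degree \(2j+1\) with nonzero leading coefficients, so \(t^{2k+1}\) lies in their span. The Abel-regularized Fubini step is fine, and the Mellin evaluation of \(\int_0^\infty\omega^{2p-1}\operatorname{sech}(\pi\omega/2)\,d\omega\) is right.

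\textbf{The gap.} Everything that distinguishes the lemma from ``some rational combination of \(\beta(2p)/\pi^{2p-1}\)'' sits in Step 2, and there you only list strategies. You need
\[
(2p-1)!\,[\omega^{2p-1}]Q_k(\omega)=[z^{2k+1}](z^2-1)^k(\arcsin z)^{2p-1},
\]
which is the actual content of the lemma, and none of your three routes establishes it.
\begin{itemize}
\item Your first route defines \(Q_k\) only implicitly, by inverting the triangular change of basis from \(t^{2k+1}\) to the \(P_{0,2j+1}\). This gives no closed form for \(Q_k\).
\item The heuristic of applying \((1-t^2)\frac{d}{dt}-t\) to \(t^{2k+1}\) points the wrong way. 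That operator differentiates \(g_k\); it does not invert the basis change. Endpoint multipliers also say nothing about interior coefficients.
\item The ``three-term recurrence in \(k\)'' is never written down for either side.
\end{itemize}
For comparison, the paper does not prove this either: it simply specializes Corollary 1.3 of the earlier work with \(n=k+1\).

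\textbf{The missing idea.} Take a generating function over the power of \(\tanh\); this turns the Fourier transform into a translation. For real \(w=\tanh a\in(-1,1)\),
\[
\sum_{r\ge0}w^r\frac{\tanh^r x}{\cosh x}=\frac{1}{\cosh x-w\sinh x}=\frac{\cosh a}{\cosh(x-a)} .
\]
By dominated convergence this gives
\[
\sum_{r\ge0}w^r\int_{\mathbb R}e^{i\omega x}\frac{\tanh^r x}{\cosh x}\,dx
=\pi\operatorname{sech}\Bigl(\frac{\pi\omega}{2}\Bigr)\frac{e^{i\omega\operatorname{arctanh}w}}{\sqrt{1-w^2}} .
\]
Taking the odd part in \(w\), with your normalization, yields
\[
Q_k(\omega)=[w^{2k+1}]\,(1-w^2)^{-1/2}\sin(\omega\operatorname{arctanh}w).
\]

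Now write \([z^{2k+1}](z^2-1)^k\sinh(\omega\arcsin z)\) as a Cauchy integral and substitute \(z=iw/\sqrt{1-w^2}\). This is a local biholomorphism at \(0\), and under it
\[
\arcsin z=i\operatorname{arctanh}w,\qquad z^2-1=-\frac{1}{1-w^2},\qquad dz=i(1-w^2)^{-3/2}\,dw .
\]
The form \((z^2-1)^k\sinh(\omega\arcsin z)\,z^{-2k-2}\,dz\) then becomes exactly \(\sin(\omega\operatorname{arctanh}w)\,w^{-2k-2}(1-w^2)^{-1/2}\,dw\). Hence
\[
Q_k(\omega)=[z^{2k+1}](z^2-1)^k\sinh(\omega\arcsin z).
\]
Expanding \(\sinh\) in powers of \(\omega\) and comparing coefficients of \(\omega^{2p-1}\) gives your Step 2 identity. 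As a check, at \(k=1\) both sides equal \(\tfrac56\omega-\tfrac16\omega^3\). With this inserted, your Fourier argument becomes a complete proof that does not depend on the cited corollary.
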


\begin{proof}
This is the special case \(n=k+1\) of Corollary~1.3 in
\cite{TallaWaffo2026}.
\end{proof}

\begin{lemma}[Coefficient collapse]
\label[lemma]{lem:beta-coefficient-collapse}
Let \(m\geq n\geq1\), with \(m+n\) even, and put
\(k_0:=(m-n)/2\) and \(K:=(m+n)/2-1\).
By \cref{lem:sech-tanh-derivative-polynomial}, write
\[
P_{m,n-1}(t)
=
\sum_{k=k_0}^{K}a_{m,n,k}t^{2k+1},
\qquad
A_{m,n}(w)
:=
\sum_{k=k_0}^{K}a_{m,n,k}w^k.
\]
For \(p\geq1\), define
\[
S_p
:=
\sum_{k=k_0}^{K}
a_{m,n,k}
[z^{2k+1}]
(z^2-1)^k(\arcsin z)^{2p-1}.
\]
Then
\[
S_p
=
\begin{cases}
0, & 2p<n,\\[2mm]
\displaystyle
(-1)^{(m-n)/2}
\frac{(2p-1)!}{(2p-n)!}
[u^{m+n-2p}]
\frac{u}{\sin u}(u\cot u)^m,
& 2p\geq n.
\end{cases}
\]
\end{lemma}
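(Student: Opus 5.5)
The plan is to turn each coefficient extraction in $S_p$ into a residue, and then change variables by $z=\sin u$. After that substitution the weighted sum over $k$ should reassemble into the derivative $f_m^{(n-1)}$ from \cref{lem:sech-tanh-derivative-polynomial}, evaluated at a complex point.

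\textbf{Step 1 (residues).} For a power series $F$ we have $[z^{2k+1}]F(z)=\operatorname{Res}_{z=0}F(z)z^{-2k-2}$. Since $z=\sin u$ is a local biholomorphism at $0$, residues are invariant under this substitution. Using $dz=\cos u\,du$, $(z^2-1)^k=(-1)^k\cos^{2k}u$ and $\arcsin z=u$, I expect
\[
[z^{2k+1}](z^2-1)^k(\arcsin z)^{2p-1}
=(-1)^k\,[u^{-1}]\,\frac{u^{2p-1}\cot^{2k+1}u}{\sin u}.
\]
Because $(i\cot u)^{2k+1}=i(-1)^k\cot^{2k+1}u$, summing against $a_{m,n,k}$ gives
\[
S_p=-i\,[u^{-1}]\,\frac{u^{2p-1}}{\sin u}\,P_{m,n-1}(i\cot u).
\]

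\textbf{Step 2 (closed form for $P_{m,n-1}(i\cot u)/\sin u$).} By \cref{lem:sech-tanh-derivative-polynomial}, $P_{m,n-1}(\tanh x)/\cosh x=f_m^{(n-1)}(x)$. This is an identity of meromorphic functions, so it may be continued to complex $x$. I will substitute $x=i(\pi/2-u)$, for which $\tanh x=i\cot u$, $\cosh x=\sin u$ and $d/dx=i\,d/du$. This yields
\[
\frac{P_{m,n-1}(i\cot u)}{\sin u}=i^{n-1}\,i^m\,D_u^{\,n-1}\!\left(\frac{\cot^m u}{\sin u}\right).
\]
I will double-check this sign bookkeeping, since it is where errors are most likely.

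\textbf{Step 3 (integration by parts in the residue).} The residue of an exact derivative of a Laurent series vanishes, so $[u^{-1}]\,g\,D^{n-1}G=(-1)^{n-1}[u^{-1}]\,G\,D^{n-1}g$. Apply this with $g=u^{2p-1}$. If $2p<n$, then $D^{n-1}u^{2p-1}=0$, which gives $S_p=0$. If $2p\ge n$, then $D^{n-1}u^{2p-1}=\frac{(2p-1)!}{(2p-n)!}u^{2p-n}$. Finally,
\[
[u^{-1}]\,u^{2p-n}\frac{\cot^m u}{\sin u}=[u^{m+n-2p}]\,\frac{u}{\sin u}(u\cot u)^m,
\]
obtained by multiplying through by $u^{m+1}$.

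\textbf{Step 4 (constant).} Collecting the factors gives $-i\cdot i^{n-1}i^m(-1)^{n-1}=i^{m+n}(-1)^n$. Since $m+n$ is even, this equals $(-1)^{(m+n)/2}(-1)^n=(-1)^{(m-n)/2}$, as the statement requires.

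The main obstacle is Step 2, specifically tracking the powers of $i$ and the signs through the complex substitution and the chain rule. As a safeguard I would verify the final constant on the case $m=n=1$, where $P_{1,0}=t$ and the identity can be checked directly.
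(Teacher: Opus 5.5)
Your proposal is correct and follows essentially the same route as the paper. Both arguments write the extraction as a residue, substitute $z=\sin u$, identify $P_{m,n-1}(i\cot u)/\sin u=i^{m+n-1}\frac{d^{n-1}}{du^{n-1}}\bigl(\cos^m u/\sin^{m+1}u\bigr)$ via $x=i(\pi/2-u)$, integrate by parts $n-1$ times, and arrive at the same constant $(-1)^{(m-n)/2}$; the only difference is cosmetic, since you convert termwise with $[u^{-1}]$ while the paper first packages the sum as $[z]\,A_{m,n}(1-z^{-2})(\arcsin z)^{2p-1}$ and uses contour integrals.
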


\begin{proof}
Since \((z^2-1)^k=z^{2k}(1-z^{-2})^k\), we have
\[
[z^{2k+1}]
(z^2-1)^k(\arcsin z)^{2p-1}
=
[z]\,(1-z^{-2})^k(\arcsin z)^{2p-1}.
\]
Hence
\[
S_p
=
[z]\,
A_{m,n}(1-z^{-2})(\arcsin z)^{2p-1}.
\]
By Cauchy's coefficient formula,
\[
S_p
=
\frac{1}{2\pi i}
\oint
A_{m,n}(1-z^{-2})
\frac{(\arcsin z)^{2p-1}}{z^2}\,dz,
\]
where the contour is a sufficiently small positively oriented circle
around the origin.

Set \(u=\arcsin z\), so that \(z=\sin u\) and
\(dz=\cos u\,du\). Since \(1-\sin^{-2}u=-\cot^2u\),
\[
S_p
=
\frac{1}{2\pi i}
\oint
u^{2p-1}A_{m,n}(-\cot^2u)
\frac{\cos u}{\sin^2u}\,du.
\]
From \(P_{m,n-1}(t)=tA_{m,n}(t^2)\), it follows that
\[
A_{m,n}(-\cot^2u)
=
\frac{P_{m,n-1}(i\cot u)}{i\cot u},
\]
and therefore
\[
S_p
=
\frac{1}{2\pi i}
\oint
u^{2p-1}
\frac{P_{m,n-1}(i\cot u)}{i\sin u}\,du.
\]

Now set \(x=i(\pi/2-u)\). Then
\(\tanh x=i\cot u\), \(\cosh x=\sin u\), and
\(d/dx=i\,d/du\). Moreover,
\[
\frac{\tanh^m x}{\cosh x}
=
i^m\frac{\cos^m u}{\sin^{m+1}u}.
\]
Using the defining identity for \(P_{m,n-1}\), we obtain
\[
\frac{P_{m,n-1}(i\cot u)}{\sin u}
=
i^{m+n-1}
\frac{d^{n-1}}{du^{n-1}}
\left(
\frac{\cos^m u}{\sin^{m+1}u}
\right).
\]

Since \(m+n\) is even,
\[
S_p
=
\frac{(-1)^{(m+n-2)/2}}{2\pi i}
\oint
u^{2p-1}
\frac{d^{\,n-1}}{du^{\,n-1}}
\left(
\frac{\cos^m u}{\sin^{m+1}u}
\right)\,du.
\]

Set \(F(u)=\dfrac{\cos^m u}{\sin^{m+1}u}\). Then
\[
S_p
=
\frac{(-1)^{(m+n-2)/2}}{2\pi i}
\oint u^{2p-1}F^{(n-1)}(u)\,du.
\]

On a closed contour, integration by parts gives
\[
\oint f(u)g'(u)\,du=-\oint f'(u)g(u)\,du,
\]
because \(\displaystyle\oint (f(u)g(u))'\,du=0\). This applies here since the contour
does not pass through any pole of the meromorphic functions involved.

Applying this identity \(n-1\) times yields
\[
\oint u^{2p-1}F^{(n-1)}(u)\,du
=
(-1)^{n-1}
\oint
\frac{d^{\,n-1}}{du^{\,n-1}}
\bigl(u^{2p-1}\bigr)
F(u)\,du.
\]

Since \(u^{2p-1}\) has degree \(2p-1\), its \((n-1)\)-st derivative
vanishes whenever \(n-1>2p-1\), i.e. whenever \(2p<n\). Hence
\[
\boxed{S_p=0 \qquad \text{for } 2p<n.}
\]

If \(2p\ge n\), then
\[
\frac{d^{\,n-1}}{du^{\,n-1}}u^{2p-1}
=
\frac{(2p-1)!}{(2p-n)!}\,u^{2p-n},
\]
and therefore
\[
S_p
=
\frac{(-1)^{(m+n-2)/2+n-1}}{2\pi i}
\frac{(2p-1)!}{(2p-n)!}
\oint
u^{2p-n}
\frac{\cos^m u}{\sin^{m+1}u}\,du.
\]

Since
\[
(-1)^{(m+n-2)/2+n-1}
=
(-1)^{(m-n)/2}
\]
and
\[
\frac{\cos^m u}{\sin^{m+1}u}
=
u^{-m-1}
\frac{u}{\sin u}(u\cot u)^m,
\]
we obtain
\[
S_p
=
(-1)^{(m-n)/2}
\frac{(2p-1)!}{(2p-n)!}
\frac{1}{2\pi i}
\oint
u^{\,2p-n-m-1}
\frac{u}{\sin u}(u\cot u)^m\,du.
\]

By Cauchy's coefficient formula,
\[
\boxed{
S_p
=
(-1)^{(m-n)/2}
\frac{(2p-1)!}{(2p-n)!}
[u^{m+n-2p}]
\frac{u}{\sin u}(u\cot u)^m
}
\qquad (2p\ge n).
\]

\end{proof}

\begin{proposition}
\label[proposition]{prop:sech-tanh-integral}
Let \(m,n\) be positive integers satisfying
\(m\geq n\geq1\) and \(m+n\equiv0\pmod2\). Then
\[
\boxed{
\int_0^\infty
\frac{\tanh^m x}{x^n\cosh x}\,dx=
(-1)^{(m-n)/2}
\sum_{p=\lceil n/2\rceil}^{(m+n)/2}
2^{2p}
\binom{2p-1}{n-1}
\frac{\beta(2p)}{\pi^{2p-1}}
[u^{m+n-2p}]
\frac{u}{\sin u}(u\cot u)^m.
}
\]
\end{proposition}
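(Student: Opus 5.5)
We prove the formula by reducing it to the three lemmas above. The plan is to integrate by parts \(n-1\) times to strip the singular factor \(x^{-n}\) down to \(x^{-1}\), expand the resulting derivative via \cref{lem:sech-tanh-derivative-polynomial}, evaluate each monomial with \cref{lem:beta-kernel}, and then collapse the coefficient sums with \cref{lem:beta-coefficient-collapse}.

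\emph{Step 1 (integration by parts).} Set \(f_m(x)=\tanh^m x/\cosh x\). Near \(0\) we have \(f_m(x)=O(x^m)\), and \(f_m\) together with all its derivatives decays like \(e^{-x}\) at infinity. Since \(m\geq n\), for \(r\le n-1\) the derivative \(f_m^{(r)}\) vanishes at \(0\) to order at least \(m-r\geq n-r\). We apply
\[
\int_0^\infty \frac{g(x)}{x^{j+1}}\,dx=\frac1j\int_0^\infty\frac{g'(x)}{x^j}\,dx
\]
repeatedly, starting with \(j=n-1\) and \(g=f_m\). Each boundary term \(g(x)/x^{j}\) vanishes at \(0\) by the vanishing order just noted, and at \(\infty\) by exponential decay. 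This gives
\[
\int_0^\infty\frac{f_m(x)}{x^n}\,dx=\frac{1}{(n-1)!}\int_0^\infty\frac{f_m^{(n-1)}(x)}{x}\,dx .
\]

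\emph{Step 2 (monomial expansion).} By \cref{lem:sech-tanh-derivative-polynomial} with \(r=n-1\), where \(m+r\) is odd, we have \(f_m^{(n-1)}(x)=\sum_{k=k_0}^{K}a_{m,n,k}\tanh^{2k+1}x/\cosh x\). Each term equals \(\sinh^{2k+1}x/\cosh^{2k+2}x\). We apply \cref{lem:beta-kernel} termwise; each integral converges individually because \(2k+1\geq1\). Swapping the finite sums over \(k\) and \(p\), and extending the range of \(p\) to \(1\le p\le K+1=(m+n)/2\), yields
\[
\int_0^\infty\frac{f_m(x)}{x^n}\,dx=\frac{1}{(n-1)!}\sum_{p=1}^{(m+n)/2}2^{2p}\frac{\beta(2p)}{\pi^{2p-1}}S_p .
\]
The extension is harmless because for \(p>k+1\) the series \((z^2-1)^k(\arcsin z)^{2p-1}\) has lowest degree \(2p-1>2k+1\), so the coefficient \([z^{2k+1}]\) is zero.

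\emph{Step 3 (collapse).} Insert \cref{lem:beta-coefficient-collapse}. The terms with \(2p<n\) vanish, so the sum starts at \(p=\lceil n/2\rceil\). For the remaining terms,
\[
\frac{1}{(n-1)!}\frac{(2p-1)!}{(2p-n)!}=\binom{2p-1}{n-1},
\]
which produces exactly the stated formula with the sign \((-1)^{(m-n)/2}\).

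The main obstacle is bookkeeping rather than analysis. One point is to justify that the boundary terms vanish at \(0\) at every stage, which uses \(m\geq n\) essentially. The other is to check that the upper limit \((m+n)/2\) of the \(p\)-sum is consistent with the \(p\)-range of \cref{lem:beta-kernel} for each \(k\). We settle the latter by the lowest-degree argument for \(\arcsin\) given in Step 2.
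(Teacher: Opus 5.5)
Your proposal is correct and follows the paper's proof essentially step for step. You integrate by parts \(n-1\) times, expand \(f_m^{(n-1)}\) via the derivative-polynomial lemma, apply the beta kernel termwise, and extend the \(p\)-range using the lowest-degree argument for \(\arcsin\) (equivalent to the paper's replacing the lower \(k\)-limit by \(k_0\)); you then collapse with \cref{lem:beta-coefficient-collapse} and the identity \(\frac{1}{(n-1)!}\frac{(2p-1)!}{(2p-n)!}=\binom{2p-1}{n-1}\). Your check that every boundary term at \(0\) is \(O(x^{m-n+1})\) is in fact slightly more explicit than the paper's.
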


\begin{proof}
Set \(f_m(x):=\tanh^m x/\cosh x\). Near the origin,
\(f_m(x)=O(x^m)\), while \(f_m\) and all its derivatives decay
exponentially as \(x\to\infty\). Since \(m\geq n\), all boundary
terms below vanish. Thus \(n-1\) integrations by parts give
\[
\int_0^\infty \frac{f_m(x)}{x^n}\,dx
=
\frac1{(n-1)!}
\int_0^\infty \frac{f_m^{(n-1)}(x)}{x}\,dx.
\]

By \cref{lem:sech-tanh-derivative-polynomial},
\[
f_m^{(n-1)}(x)
=
\frac{P_{m,n-1}(\tanh x)}{\cosh x},
\qquad
P_{m,n-1}(t)
=
\sum_{k=k_0}^{K}a_{m,n,k}t^{2k+1},
\]
where \(k_0=(m-n)/2\) and \(K=(m+n)/2-1\). Hence
\[
\begin{aligned}
\int_0^\infty
\frac{\tanh^m x}{x^n\cosh x}\,dx
&=
\frac1{(n-1)!}
\sum_{k=k_0}^{K}a_{m,n,k}
\int_0^\infty
\frac{\tanh^{2k+1}x}{x\cosh x}\,dx
\\
&=
\frac1{(n-1)!}
\sum_{k=k_0}^{K}a_{m,n,k}
\int_0^\infty
\frac{\sinh^{2k+1}x}{x\cosh^{2k+2}x}\,dx.
\end{aligned}
\]

Applying \cref{lem:beta-kernel} termwise yields
\[
\frac1{(n-1)!}
\sum_{k=k_0}^{K}a_{m,n,k}
\sum_{p=1}^{k+1}
2^{2p}\frac{\beta(2p)}{\pi^{2p-1}}
[z^{2k+1}]
(z^2-1)^k(\arcsin z)^{2p-1}.
\]
Interchanging the finite sums gives
\[
\frac1{(n-1)!}
\sum_{p=1}^{K+1}
2^{2p}\frac{\beta(2p)}{\pi^{2p-1}}
\sum_{k=\max(k_0,p-1)}^{K}
a_{m,n,k}
[z^{2k+1}]
(z^2-1)^k(\arcsin z)^{2p-1}.
\]
For \(k<p-1\), the coefficient in the inner sum vanishes since
\((\arcsin z)^{2p-1}\) starts in degree \(2p-1>2k+1\).
Thus the lower limit may be replaced by \(k_0\).

By \cref{lem:beta-coefficient-collapse}, the inner sum vanishes for
\(2p<n\), so \(p\geq\lceil n/2\rceil\). For the remaining terms,
\[
\begin{aligned}
&\sum_{k=k_0}^{K}
a_{m,n,k}
[z^{2k+1}]
(z^2-1)^k(\arcsin z)^{2p-1}
\\
&\qquad=
(-1)^{(m-n)/2}
\frac{(2p-1)!}{(2p-n)!}
[u^{m+n-2p}]
\frac{u}{\sin u}(u\cot u)^m.
\end{aligned}
\]
Finally,
\[
\frac1{(n-1)!}\frac{(2p-1)!}{(2p-n)!}
=
\binom{2p-1}{n-1},
\qquad
K+1=\frac{m+n}{2},
\]
which proves the result.
\end{proof}

\begin{example}
\label{ex:m8-sech-tanh}
For \(m=8\), the coefficient kernel is
\[
\frac{u}{\sin u}(u\cot u)^8
=
1-\frac52u^2+\frac{301}{120}u^4
-\frac{1247}{1008}u^6+\frac{35}{128}u^8+O(u^{10}).
\]
Thus, for \(n\in\{2,4,6,8\}\), \cref{prop:sech-tanh-integral} yields
\[
\int_0^\infty
\frac{\tanh^8x}{x^2\cosh x}\,dx
=
-\frac{35}{32}\frac{\beta(2)}{\pi}
+\frac{1247}{21}\frac{\beta(4)}{\pi^3}
-\frac{2408}{3}\frac{\beta(6)}{\pi^5}
+4480\frac{\beta(8)}{\pi^7}
-9216\frac{\beta(10)}{\pi^9}.
\]
\[
\int_0^\infty
\frac{\tanh^8x}{x^4\cosh x}\,dx
=
\frac{35}{8}\frac{\beta(4)}{\pi^3}
-\frac{49880}{63}\frac{\beta(6)}{\pi^5}
+\frac{67424}{3}\frac{\beta(8)}{\pi^7}
-215040\frac{\beta(10)}{\pi^9}
+675840\frac{\beta(12)}{\pi^{11}}.
\]
\[
\int_0^\infty
\frac{\tanh^8x}{x^6\cosh x}\,dx
=
-\frac{35}{2}\frac{\beta(6)}{\pi^5}
+\frac{19952}{3}\frac{\beta(8)}{\pi^7}
-\frac{1618176}{5}\frac{\beta(10)}{\pi^9}
+4730880\frac{\beta(12)}{\pi^{11}}
-21086208\frac{\beta(14)}{\pi^{13}}.
\]
\[
\int_0^\infty
\frac{\tanh^8x}{x^8\cosh x}\,dx
=
70\frac{\beta(8)}{\pi^7}
-\frac{319232}{7}\frac{\beta(10)}{\pi^9}
+3390464\frac{\beta(12)}{\pi^{11}}
-70287360\frac{\beta(14)}{\pi^{13}}
+421724160\frac{\beta(16)}{\pi^{15}}.
\]
\end{example}

\begin{example}
\label{ex:m9-sech-tanh}
For \(m=9\), the coefficient kernel is
\[
\frac{u}{\sin u}(u\cot u)^9
=
1-\frac{17}{6}u^2+\frac{239}{72}u^4
-\frac{30539}{15120}u^6
+\frac{25609}{40320}u^8+O(u^{10}).
\]
Thus, for \(n\in\{1,3,5,7,9\}\), \cref{prop:sech-tanh-integral} yields
\[
\int_0^\infty
\frac{\tanh^9x}{x\cosh x}\,dx
=
\frac{25609}{10080}\frac{\beta(2)}{\pi}
-\frac{30539}{945}\frac{\beta(4)}{\pi^3}
+\frac{1912}{9}\frac{\beta(6)}{\pi^5}
-\frac{2176}{3}\frac{\beta(8)}{\pi^7}
+1024\frac{\beta(10)}{\pi^9}.
\]
\[
\int_0^\infty \frac{\tanh^9x}{x^3\cosh x}\,dx
=
-\frac{25609}{840}\frac{\beta(4)}{\pi^3}
+\frac{244312}{189}\frac{\beta(6)}{\pi^5}
-\frac{53536}{3}\frac{\beta(8)}{\pi^7}
+104448\frac{\beta(10)}{\pi^9}
-225280\frac{\beta(12)}{\pi^{11}}.
\]
\[
\int_0^\infty \frac{\tanh^9x}{x^5\cosh x}\,dx
=
\frac{25609}{126}\frac{\beta(6)}{\pi^5}
-\frac{488624}{27}\frac{\beta(8)}{\pi^7}
+428288\frac{\beta(10)}{\pi^9}
-3829760\frac{\beta(12)}{\pi^{11}}
+11714560\frac{\beta(14)}{\pi^{13}}.
\]
\[
\int_0^\infty \frac{\tanh^9x}{x^7\cosh x}\,dx
=
-\frac{51218}{45}\frac{\beta(8)}{\pi^7}
+\frac{7817984}{45}\frac{\beta(10)}{\pi^9}
-\frac{18844672}{3}\frac{\beta(12)}{\pi^{11}}
+79659008\frac{\beta(14)}{\pi^{13}}
-328007680\frac{\beta(16)}{\pi^{15}}.
\]
\[
\int_0^\infty \frac{\tanh^9x}{x^9\cosh x}\,dx
=
\frac{204872}{35}\frac{\beta(10)}{\pi^9}
-\frac{85997824}{63}\frac{\beta(12)}{\pi^{11}}
+69994496\frac{\beta(14)}{\pi^{13}}
-1194885120\frac{\beta(16)}{\pi^{15}}
+6372720640\frac{\beta(18)}{\pi^{17}}.
\]
\end{example}

\section{Sign alternation in coefficient formulae}\label[section]{sec:sign_alternation}

The explicit formulae above reveal an alternating pattern that is not a
numerical accident.  This section isolates that phenomenon and explains
three mechanisms behind it.  For the boundary shifted-hyperbolic family,
a change of variables reduces the relevant coefficients to generalized
Bernoulli polynomials.  For the arctanh and logarithmic tangent families,
the coefficient-generating polynomials are identified with continuous
dual Hahn or continuous Hahn polynomials.  Orthogonality then forces
their zeros onto positive or symmetric real sets, from which strict
coefficient alternation follows.

\begin{proposition}[Alternation in the boundary case \(k=n-1\)]
\label[proposition]{prop:alternating-boundary}
Let \(n\geq1\) and set \(k=n-1\). Then
\[
\int_0^\infty
\frac{\sinh((2n-1)x)}{x\cosh^{2n+1}x}\,dx
=
\sum_{p=1}^{n}
(2^{2p+1}-1)
[z^{2n}]\,U_{2n-2}(z)(\arcsin z)^{2p}
\frac{\zeta(2p+1)}{\pi^{2p}},
\]
and
\[
\int_0^\infty
\frac{\sinh((2n-1)x)}{x\cosh^{2n}x}\,dx
=
\sum_{p=1}^{n}
2^{2p}
[z^{2n-1}]\,U_{2n-2}(z)(\arcsin z)^{2p-1}
\frac{\beta(2p)}{\pi^{2p-1}}.
\]
Moreover, for every \(1\leq p\leq n\),
\[
\begin{cases}
\displaystyle
(-1)^{p-1}
[z^{2n}]\,U_{2n-2}(z)(\arcsin z)^{2p}>0,\\[2mm]
\displaystyle
(-1)^{p-1}
[z^{2n-1}]\,U_{2n-2}(z)(\arcsin z)^{2p-1}>0.
\end{cases}
\]
Hence the summands on the right-hand sides of both integral
representations alternate strictly in sign, beginning with a positive
term.
\end{proposition}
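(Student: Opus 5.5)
The two integral representations are not proved here. They are the boundary case $k=n-1$ of the Chebyshev--arcsine formulae for the shifted hyperbolic family quoted from \cite{TallaWaffo2026}, and I would simply cite them. The new content is the strict sign statement. My plan is to repeat the contour substitution used in \cref{lem:beta-coefficient-collapse}, turning each coefficient into a coefficient of an elementary trigonometric kernel, and then to recognise that kernel as a generating function of generalized Bernoulli polynomials.

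\textbf{Step 1 (contour reduction).} Write
\[
[z^{2n}]U_{2n-2}(z)(\arcsin z)^{2p}=\frac{1}{2\pi i}\oint U_{2n-2}(z)(\arcsin z)^{2p}z^{-2n-1}\,dz
\]
and substitute $z=\sin u$. Since $\sin u=\cos(\pi/2-u)$, the normalization of $U_m$ gives
\[
U_{2n-2}(\sin u)=(-1)^{n-1}\frac{\cos((2n-1)u)}{\cos u}.
\]
The factor $\cos u$ cancels against $dz=\cos u\,du$. I expect the result
\[
[z^{2n}]U_{2n-2}(z)(\arcsin z)^{2p}=(-1)^{n-1}[u^{2n-2p}]\cos((2n-1)u)\Bigl(\frac{u}{\sin u}\Bigr)^{2n+1}.
\]
In the same way, the beta coefficient $[z^{2n-1}]U_{2n-2}(z)(\arcsin z)^{2p-1}$ should equal the same expression with exponent $2n$ in place of $2n+1$.

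\textbf{Step 2 (passage to generalized Bernoulli polynomials).} Put $u=it$. Then the coefficient of $u^{2j}$ is $(-1)^j$ times $[t^{2j}]\cosh((2n-1)t)(t/\sinh t)^N$, where $N\in\{2n+1,2n\}$. Using
\[
\Bigl(\frac{t}{\sinh t}\Bigr)^N e^{\pm(2n-1)t}=\Bigl(\frac{2t}{e^{2t}-1}\Bigr)^N e^{(N/2\pm(2n-1)/2)\cdot 2t},
\]
the generating function $(s/(e^s-1))^Ne^{xs}=\sum_k B_k^{(N)}(x)s^k/k!$ applies at the points $x_\pm=N/2\pm(2n-1)/2$. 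The symmetry $B_k^{(N)}(N-x)=(-1)^kB_k^{(N)}(x)$ makes the two exponentials contribute equally for even $k$. Hence the relevant coefficient is a positive multiple of $B^{(N)}_{2n-2p}(x_+)$:
\begin{itemize}
\item in the zeta case $N=2n+1$, the point is $x_+=2n=N-1$;
\item in the beta case $N=2n$, the point is $x_+=2n-\tfrac12=N-\tfrac12$.
\end{itemize}
Collecting the signs $(-1)^{n-1}$, $(-1)^{n-p}$ and $(-1)^{p-1}$, whose product is $+1$, the claim reduces to showing $B^{(N)}_{2n-2p}(x_+)>0$ for $1\le p\le n$.

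\textbf{Step 3 (positivity, the main step).} I would use the classical Nörlund facts
\[
B_{N-1}^{(N)}(x)=(x-1)(x-2)\cdots(x-N+1),\qquad \frac{d}{dx}B^{(N)}_k(x)=kB^{(N)}_{k-1}(x).
\]
Together they give, for $k\le N-1$,
\[
B^{(N)}_k(x)=\frac{k!}{(N-1)!}\,\frac{d^{N-1-k}}{dx^{N-1-k}}\prod_{i=1}^{N-1}(x-i).
\]
The product is monic with simple real roots $1,\dots,N-1$. By Rolle's theorem, every derivative of it is monic with all roots real and lying in $(1,N-1)$, so every derivative is strictly positive for $x\ge N-1$.

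\begin{itemize}
\item In the beta case, $x_+=N-\tfrac12>N-1$, so even the undifferentiated product (the case $k=N-1$, which does not actually occur since $k=2n-2p\le N-2$) would be positive.
\item In the zeta case, $x_+=N-1$ is itself a root of the product. However, the derivative order is $N-1-k=2p\ge2$, so only genuine derivatives occur, and these are positive at $N-1$.
\end{itemize}

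The delicate points are the careful bookkeeping of signs and normalizations in Steps 1--2 and this boundary root in the zeta case. Nonvanishing of every coefficient then gives strict alternation starting from a positive $p=1$ term.
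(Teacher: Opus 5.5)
Your proposal is correct and follows essentially the paper's route: the substitution $z=\sin u$ reduces each coefficient to $(-1)^{n-1}[u^{2n-2p}]\cos((2n-1)u)(u/\sin u)^{2n+1}$ (zeta case) or the same with exponent $2n$ (beta case); the $\cosh$/$\cos$ kernel is then a generalized Bernoulli polynomial, and Nörlund's factorization $B^{(N)}_{N-1}(x)=\prod_{i=1}^{N-1}(x-i)$ together with the derivative rule gives the positivity. The only difference is cosmetic: you evaluate at the reflected point $x_+=N-\xi$ and use Rolle's theorem (the derivatives of the monic product have positive leading coefficient rather than being monic, which is all you need), whereas the paper evaluates at $\xi\in\{1,\tfrac12\}$ and uses the Leibniz rule to write the value as a sum of products of exactly $2n-2p$ negative factors.
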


\begin{proof}
The two integral identities are the specialization \(k=n-1\) of
\cite[Prop.~1.2]{TallaWaffo2026}.  It remains only to establish the
asserted sign pattern.

The Chebyshev polynomials of the second kind satisfy
\[
U_j(\cos\theta)
=
\frac{\sin((j+1)\theta)}{\sin\theta};
\]
see \cite[Eq.~(18.5.2)]{NIST:DLMF}. Hence
\[
U_{2n-2}(\sin t)
=
(-1)^{n-1}
\frac{\cos((2n-1)t)}{\cos t}.
\tag{1}
\]

For \(N,r\geq0\), set
\[
C_{N,r}
:=
[z^N]\,U_{2n-2}(z)(\arcsin z)^r.
\]
Since \(z=\sin t\) is locally invertible at the origin, Cauchy's
coefficient formula and \((1)\) give
\[
C_{N,r}
=
(-1)^{n-1}
[t^{N-r}]
\cos((2n-1)t)
\left(\frac{t}{\sin t}\right)^{N+1}.
\tag{2}
\]

We now use the generalized Bernoulli polynomials \(B_q^{(\alpha)}(x)\),
defined by
\[
\left(\frac{w}{e^w-1}\right)^\alpha e^{xw}
=
\sum_{q=0}^{\infty}
B_q^{(\alpha)}(x)\frac{w^q}{q!};
\]
see \cite[Eq.~(24.16.1)]{NIST:DLMF}. Put
\(\nu=2n-1\). Since
\[
\frac{t}{\sin t}
=
\frac{2it\,e^{it}}{e^{2it}-1},
\]
taking \(w=2it\) gives
\[
e^{\pm i\nu t}
\left(\frac{t}{\sin t}\right)^\alpha
=
\left(\frac{w}{e^w-1}\right)^\alpha
e^{(\alpha\pm\nu)w/2}.
\]
Averaging the two identities and using
\[
B_q^{(\alpha)}(\alpha-x)
=
(-1)^q B_q^{(\alpha)}(x),
\]
which follows directly from the generating function, yields
\[
[t^{2j}]
\cos((2n-1)t)
\left(\frac{t}{\sin t}\right)^\alpha
=
\frac{(-4)^j}{(2j)!}
B_{2j}^{(\alpha)}
\left(
\frac{\alpha-(2n-1)}{2}
\right).
\tag{3}
\]

Fix \(1\leq p\leq n\) and put \(j=n-p\). The two coefficient families
are obtained by choosing
\[
(N,r,\alpha,\xi)
=
\begin{cases}
(2n,\,2p,\,2n+1,\,1),
& \text{for the zeta expansion},\\[2mm]
(2n-1,\,2p-1,\,2n,\,\frac12),
& \text{for the beta expansion}.
\end{cases}
\tag{4}
\]
In both cases \(N-r=2j\), \(\alpha=N+1\), and
\(\xi=(\alpha-(2n-1))/2\). Thus \((2)\) and \((3)\) give
\[
C_{N,r}
=
(-1)^{n-1}
\frac{(-4)^j}{(2j)!}
B_{2j}^{(\alpha)}(\xi).
\tag{5}
\]

We use the classical factorization
\[
B_{m-1}^{(m)}(x)
=
\prod_{\ell=1}^{m-1}(x-\ell),
\qquad m\geq2,
\tag{6}
\]
see \cite[p.~147]{Norlund1924} or
\cite[Eq.~(28)]{DilcherVignat2017}.
Moreover, differentiation of the generating function gives
\[
\frac{d^q}{dx^q}B_s^{(\alpha)}(x)
=
\frac{s!}{(s-q)!}B_{s-q}^{(\alpha)}(x).
\tag{7}
\]
Since \(\alpha-1-r=2j\), equations \((6)\) and \((7)\) imply
\[
B_{2j}^{(\alpha)}(\xi)
=
\frac{(2j)!}{(\alpha-1)!}
\left.
\frac{d^r}{dx^r}
\prod_{\ell=1}^{\alpha-1}(x-\ell)
\right|_{x=\xi}.
\tag{8}
\]

By the Leibniz rule, the derivative in \((8)\) is a positive linear
combination of products of exactly \(2j\) undifferentiated linear
factors. In the zeta case, evaluation at \(x=1\) annihilates every term
in which the factor \(x-1\) has not been differentiated; every surviving
term therefore contains \(2j\) factors \(1-\ell<0\). In the beta case,
every remaining factor is of the form \(\frac12-\ell<0\), again with
exactly \(2j\) such factors. Since \(2j\) is even, all surviving products
are positive, and hence
\[
B_{2j}^{(\alpha)}(\xi)>0
\]
in both cases.

It follows from \((5)\) that
\[
\operatorname{sgn}(C_{N,r})
=
(-1)^{n-1+j}
=
(-1)^{p-1},
\]
since \(j=n-p\). Therefore
\[
\begin{cases}
\displaystyle
\operatorname{sgn}
\left(
[z^{2n}]\,U_{2n-2}(z)(\arcsin z)^{2p}
\right)
=
(-1)^{p-1},
\\[3mm]
\displaystyle
\operatorname{sgn}
\left(
[z^{2n-1}]\,U_{2n-2}(z)(\arcsin z)^{2p-1}
\right)
=
(-1)^{p-1}.
\end{cases}
\]
Finally, all remaining factors in the two sums are positive for
\(p\geq1\). Hence both right-hand sides have the strict sign pattern
\[
+,-,+,-,\ldots,
\]
which proves the result.
\end{proof}

\begin{proposition}[Strict alternation in the arctanh coefficient formulae]
\label[proposition]{prop:arctanh-alternation}
Let \(n\geq1\), and define
\[
a_{p,n} := [z^{2n}](z^2-1)^{n-1}(\arcsin z)^{2p}, \qquad b_{p,n} := [z^{2n-1}](z^2-1)^{n-1}(\arcsin z)^{2p-1}.
\]
Then
\[
\int_0^1 \frac{x^{2n-1}}{\operatorname{arctanh}x}\,dx = \sum_{p=1}^{n} (2^{2p+1}-1)a_{p,n} \frac{\zeta(2p+1)}{\pi^{2p}},
\]
and
\[
\int_0^1 \frac{x^{2n-1}} {\sqrt{1-x^2}\,\operatorname{arctanh}x}\,dx = \sum_{p=1}^{n} 2^{2p}b_{p,n} \frac{\beta(2p)}{\pi^{2p-1}}.
\]
Moreover, for every \(1\leq p\leq n\),
\[
\begin{cases} (-1)^{p-1}a_{p,n}>0,\\[1mm] (-1)^{p-1}b_{p,n}>0. \end{cases}
\]
Consequently, the summands on the right-hand sides of both integral representations
alternate strictly in sign, beginning with a positive term.
\end{proposition}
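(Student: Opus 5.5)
The two integral identities come from the substitution \(x=\tanh t\). In the first integral, \(dx=\cosh^{-2}t\,dt\) turns \(\int_0^1 x^{2n-1}/\operatorname{arctanh}x\,dx\) into \(\int_0^\infty \sinh^{2n-1}t/(t\cosh^{2n+1}t)\,dt\). I will take this as the unshifted zeta kernel of \cite{TallaWaffo2026}, the zeta analogue of \cref{lem:beta-kernel} with \(k=n-1\); I have not checked that this exact form is stated there. In the second integral, \(dx/\sqrt{1-x^2}=dt/\cosh t\) gives \(\int_0^\infty \sinh^{2n-1}t/(t\cosh^{2n}t)\,dt\), which is exactly \cref{lem:beta-kernel} with \(k=n-1\). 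So the real content is the sign statement.

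For the signs I will first convert \(a_{p,n}\) and \(b_{p,n}\) into trigonometric coefficients, exactly as in the proof of \cref{lem:beta-coefficient-collapse}. Write \([z^{2n}](z^2-1)^{n-1}(\arcsin z)^{2p}=[z^2](1-z^{-2})^{n-1}(\arcsin z)^{2p}\), apply Cauchy's formula, and substitute \(z=\sin u\), so that \(1-\sin^{-2}u=-\cot^2u\). This should give
\[
a_{p,n}=(-1)^{n-1}[u^{2n-2p}]\cos^{2n-1}u\Bigl(\frac{u}{\sin u}\Bigr)^{2n+1},
\qquad
b_{p,n}=(-1)^{n-1}[u^{2n-2p}]\cos^{2n-1}u\Bigl(\frac{u}{\sin u}\Bigr)^{2n}.
\]
Putting \(j=n-p\), the claim becomes \((-1)^{j}[u^{2j}]G_n(u)>0\) for \(0\le j\le n-1\), where \(G_n\) is either of the two even kernels. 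Equivalently, \(G_n(iy)\) truncated to degree \(2n-2\) has strictly positive coefficients in \(y^2\). I first tried to reuse the Bernoulli route of \cref{prop:alternating-boundary}. Expanding \(\cos^{2n-1}u\) into \(\cos((2l+1)u)\) with positive binomial weights and applying (3) gives \(a_{p,n}\) and \(b_{p,n}\) as positive combinations of \(B_{2j}^{(\alpha)}\bigl((\alpha-2l-1)/2\bigr)\). However, the evaluation points \(n-l\) now lie inside \(\{1,\dots,\alpha-1\}\), so the Leibniz-rule argument after (8) loses sign control. This is the main obstacle.

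To overcome it I will use the orthogonal-polynomial mechanism. The Mellin transform \(M_n(s)=\int_0^\infty t^{s-1}\sinh^{2n-1}t/\cosh^{2n+1}t\,dt\), and its beta analogue, should factor as a gamma-type product times a polynomial \(Q_n\) of degree \(n-1\) in \(s^2\). The reading off of \(a_{p,n}\) and \(b_{p,n}\) from the coefficients of \(Q_n\) is what the coefficient extraction above encodes. The target is to identify \(Q_n\), after an affine change of variable, with a continuous dual Hahn polynomial \(S_{n-1}(x^2;a,b,c)\), with parameters such as \(a,b,c\in\{\tfrac12,1,\tfrac32\}\) to be fixed by matching the three-term recurrence inherited from the recurrence for \(P_{m,r}\) in \cref{lem:sech-tanh-derivative-polynomial}. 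I will establish the identification by checking that the recurrences coincide and that the initial values \(n=1,2\) agree. The parameters have positive real parts, so the weight \(|\Gamma(a+ix)\Gamma(b+ix)\Gamma(c+ix)/\Gamma(2ix)|^2\) is positive on \((0,\infty)\). Hence all \(n-1\) zeros in \(x^2\) are real, positive and simple.

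A real-rooted polynomial with all roots positive has strictly alternating coefficients. Translating this back to the \(u\)-coefficients gives \((-1)^{p-1}a_{p,n}>0\) and \((-1)^{p-1}b_{p,n}>0\). The prefactors \(2^{2p+1}-1\) and \(2^{2p}\) and the special values \(\zeta(2p+1)/\pi^{2p}\) and \(\beta(2p)/\pi^{2p-1}\) are positive, so the alternation passes to the summands. The leading term \(p=1\) is positive; as a check, \(n=1\) gives \(a_{1,1}=b_{1,1}=1\).
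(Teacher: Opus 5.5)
Your reduction of the two integral identities is correct and matches the paper: substitute $x=\tanh t$, then apply the case $k=n-1$ of the odd-$\sinh$ kernel formulae of \cite{TallaWaffo2026}. Your residue reformulation $a_{p,n}=(-1)^{n-1}[u^{2n-2p}]\cos^{2n-1}u\,(u/\sin u)^{2n+1}$, $b_{p,n}=(-1)^{n-1}[u^{2n-2p}]\cos^{2n-1}u\,(u/\sin u)^{2n}$ is also correct, and so is your diagnosis of why the Bernoulli argument fails.

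The sign part, however, has a genuine gap. Its one substantive step, the identification with a continuous dual Hahn polynomial, is only announced, and the route you sketch for it does not work.

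(i) The Mellin transform does not factor as a gamma product times a polynomial. For $n=1$, integration by parts gives $M_1(s)=2^{2-s}\Gamma(s)\eta(s-2)$ with $\eta(s)=\sum_{k\ge1}(-1)^{k-1}k^{-s}$. The polynomial-times-gamma structure you have in mind belongs to the Fourier transform.

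(ii) Even given such a polynomial, you assert rather than derive that its coefficients are positive multiples of $a_{p,n}$ or $b_{p,n}$. This cannot be obtained by observing that two expressions evaluate the same integral. That would require the $\mathbb{Q}$-linear independence of the numbers $\zeta(2p+1)/\pi^{2p}$ (resp. $\beta(2p)/\pi^{2p-1}$), which is not known. The identification has to be made coefficientwise.

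(iii) The recurrence of \cref{lem:sech-tanh-derivative-polynomial} runs in the derivative order $r$ at fixed $m$. Here the exponents of both $\sinh$ and $\cosh$ change with $n$, so it supplies no three-term recurrence in $n$. ``Matching recurrences and checking $n=1,2$'' therefore has nothing to match, and the Hahn parameters are never fixed.

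The missing idea is to form the exponential generating polynomials in $p$:
$P_n(x):=\sum_p a_{p,n}x^{2p}/(2p)!=[z^{2n}](z^2-1)^{n-1}\cosh(x\arcsin z)$ and $Q_n(x):=\sum_p b_{p,n}x^{2p-1}/(2p-1)!=[z^{2n-1}](z^2-1)^{n-1}\sinh(x\arcsin z)$. In your language, $P_n(x)$ is $(-1)^{n-1}$ times the residue at $u=0$ of $\cosh(xu)\cos^{2n-1}u/\sin^{2n+1}u$.

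Insert $\cosh(x\arcsin z)={}_2F_1(-\tfrac{ix}{2},\tfrac{ix}{2};\tfrac12;z^2)$ and $\sinh(x\arcsin z)=xz\,{}_2F_1(\tfrac{1-ix}{2},\tfrac{1+ix}{2};\tfrac32;z^2)$, and expand $(z^2-1)^{n-1}$ binomially. With no recurrence at all, this gives
$P_n(x)=\tfrac{x^2}{2}\,{}_3F_2(1-n,1+\tfrac{ix}{2},1-\tfrac{ix}{2};\tfrac32,2;1)=\tfrac{4^{n-1}x^2}{(2n)!}S_{n-1}(\tfrac{x^2}{4};1,\tfrac12,1)$ and
$Q_n(x)=\tfrac{4^{n-1}x}{(2n-1)!}S_{n-1}(\tfrac{x^2}{4};\tfrac12,1,\tfrac12)$.

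Positive parameters put the zeros in $X=x^2/4$ in $(0,\infty)$. The leading coefficient $(-1)^{n-1}$ gives $S_{n-1}(X)=\prod_j(\xi_j-X)$. This explicit normalisation, together with $[x^{2p}]P_n=a_{p,n}/(2p)!$ and $[x^{2p-1}]Q_n=b_{p,n}/(2p-1)!$, pins down the sign $(-1)^{p-1}$. Real-rootedness alone would give alternation but not the starting sign.
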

\begin{proof} With the substitution \(x=\tanh t\), the two integral identities are exactly the case \(k=n-1\) of \cite[Cor.~1.3]{TallaWaffo2026}.  It therefore remains to establish the sign pattern. Introduce
\[
P_n(x) := \sum_{p=1}^{n}\frac{a_{p,n}}{(2p)!}x^{2p}, \qquad Q_n(x) := \sum_{p=1}^{n}\frac{b_{p,n}}{(2p-1)!}x^{2p-1}.
\]
By the definitions of \(a_{p,n}\) and \(b_{p,n}\),
\[
\begin{cases} \displaystyle P_n(x) = [z^{2n}](z^2-1)^{n-1}\cosh(x\arcsin z),\\[2mm] \displaystyle Q_n(x) = [z^{2n-1}](z^2-1)^{n-1}\sinh(x\arcsin z).
\end{cases}
\tag{1}
\]
Using the hypergeometric expansions
\[
\cosh(x\arcsin z) = {}_2F_1\!\left( -\frac{ix}{2},\frac{ix}{2}; \frac12;z^2 \right)
\]
and
\[
\sinh(x\arcsin z) = xz\, {}_2F_1\!\left( \frac{1-ix}{2},\frac{1+ix}{2}; \frac32;z^2 \right),
\]
coefficient extraction in \((1)\) gives
\[
\begin{cases} \displaystyle P_n(x) = \frac{x^2}{2}\, {}_3F_2\!\left( \begin{matrix} 1-n,\ 1-\frac{ix}{2},\ 1+\frac{ix}{2}\\ \frac32,\ 2 \end{matrix};1 \right), \\[5mm] \displaystyle Q_n(x) = x\, {}_3F_2\!\left( \begin{matrix} 1-n,\ \frac{1-ix}{2},\ \frac{1+ix}{2}\\ \frac32,\ 1 \end{matrix};1 \right).
\end{cases}
\tag{2}
\]
Recall that the continuous dual Hahn polynomials are defined by
\[
S_m(y^2;a,b,c) = (a+b)_m(a+c)_m {}_3F_2\!\left( \begin{matrix} -m,\ a+iy,\ a-iy\\ a+b,\ a+c \end{matrix};1 \right);
\]
see \cite[Eq.~(18.26.2)]{NIST:DLMF} and \cite[\S9.3]{KoekoekLeskySwarttouw2010}. Comparing
this representation with \((2)\) yields
\[
\begin{cases} \displaystyle P_n(x) = \frac{4^{\,n-1}x^2}{(2n)!}\, S_{n-1}\!\left( \frac{x^2}{4};1,\frac12,1 \right), \\[4mm] \displaystyle Q_n(x) = \frac{4^{\,n-1}x}{(2n-1)!}\, S_{n-1}\!\left( \frac{x^2}{4};\frac12,1,\frac12 \right).
\end{cases}
\tag{3}
\]
All parameters occurring in \((3)\) are positive. Hence the two continuous dual Hahn
families are orthogonal with respect to a positive measure on \((0,\infty)\); see
\cite[Table~18.25.1 and Eqs.~(18.25.6)--(18.25.8)]{NIST:DLMF} or
\cite[\S9.3]{KoekoekLeskySwarttouw2010}. By the standard zero theorem for orthogonal
polynomials, all zeros are simple and lie in the interval of orthogonality
\cite[\S18.2(vi)]{NIST:DLMF}. For \(n\geq2\), let \(\xi_{1,n},\ldots,\xi_{n-1,n}>0\) and
\(\eta_{1,n},\ldots,\eta_{n-1,n}>0\) denote the zeros of the two continuous dual Hahn
polynomials in \((3)\), respectively. Their leading coefficient is \((-1)^{n-1}\)
\cite[Table~18.25.2]{NIST:DLMF}; therefore
\[
\begin{cases} \displaystyle S_{n-1}\!\left(X;1,\frac12,1\right) = \prod_{j=1}^{n-1}(\xi_{j,n}-X), \\[3mm] \displaystyle S_{n-1}\!\left(X;\frac12,1,\frac12\right) = \prod_{j=1}^{n-1}(\eta_{j,n}-X).
\end{cases}
\tag{4}
\]
Since all \(\xi_{j,n}\) and \(\eta_{j,n}\) are positive, the coefficients of each product in
\((4)\), ordered by increasing powers of \(X\), have the strict sign pattern
\[
+,-,+,-,\ldots.
\]
Substituting \(X=x^2/4\) into \((3)\) therefore shows that
\[
\begin{cases} \displaystyle \operatorname{sgn}[x^{2p}]P_n(x)=(-1)^{p-1},\\[2mm] \displaystyle \operatorname{sgn}[x^{2p-1}]Q_n(x)=(-1)^{p-1}. \end{cases}
\]
Since
\[
[x^{2p}]P_n(x)=\frac{a_{p,n}}{(2p)!}, \qquad [x^{2p-1}]Q_n(x)=\frac{b_{p,n}}{(2p-1)!},
\]
we obtain
\[
\begin{cases} (-1)^{p-1}a_{p,n}>0,\\[1mm] (-1)^{p-1}b_{p,n}>0. \end{cases}
\]
The case \(n=1\) is immediate from \(P_1(x)=x^2/2\) and \(Q_1(x)=x\). Finally, all remaining
factors in the two sums are positive: \(2^{2p+1}-1>0\), \(\zeta(2p+1)>0\), \(\beta(2p)>0\),
and \(\pi>0\). Hence both sums alternate strictly as
\[
+,-,+,-,\ldots,
\]
as claimed.
\end{proof}

\begin{proposition}[Strict alternation in the logarithmic tangent formulae]
\label[proposition]{prop:logtan-alternation}
Let \(n\geq1\), and define
\[
a_{p,n}:=[x]\,U_{2n-1}\!\left(\frac1x\right)(\arcsin x)^{2p}, \qquad b_{p,n}:=[x]\,U_{2n-2}\!\left(\frac1x\right)(\arcsin x)^{2p-1}.
\]
Then
\[
\int_0^{\pi/4}\frac{\sin(4nx)}{\ln(\tan x)}\,dx = \frac{(-1)^n}{2} \sum_{p=1}^{n} (2^{2p+1}-1)a_{p,n} \frac{\zeta(2p+1)}{\pi^{2p}},
\]
and
\[
\int_0^{\pi/4}\frac{\cos((4n-2)x)}{\ln(\tan x)}\,dx = (-1)^n \sum_{p=1}^{n} 2^{2p-1}b_{p,n} \frac{\beta(2p)}{\pi^{2p-1}}.
\]
Moreover, for \(1\leq p\leq n\),
\[
\begin{cases} (-1)^{n-p}a_{p,n}>0,\\[1mm] (-1)^{n-p}b_{p,n}>0. \end{cases}
\]
Consequently, after multiplication by the common factor \((-1)^n\), the summands in both
representations alternate strictly as
\[
-,+,-,+,\ldots.
\]
\end{proposition}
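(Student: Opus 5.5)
The integral identities themselves I would not reprove: they are the logarithmic tangent formulae of \cite{TallaWaffo2026}, read off at the parameters that give \(\sin(4nx)\) and \(\cos((4n-2)x)\). The substance is the sign pattern. I would follow the template of \cref{prop:arctanh-alternation} and encode the two coefficient families into the generating polynomials
\[
P_n(x):=\sum_{p=1}^{n}\frac{a_{p,n}}{(2p)!}x^{2p}=[z]\,U_{2n-1}\!\left(\tfrac1z\right)\cosh(x\arcsin z),
\qquad
Q_n(x):=\sum_{p=1}^{n}\frac{b_{p,n}}{(2p-1)!}x^{2p-1}=[z]\,U_{2n-2}\!\left(\tfrac1z\right)\sinh(x\arcsin z).
\]
The aim is then to identify them with orthogonal polynomials whose zeros are real and symmetric.

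\emph{Step 1: hypergeometric form.} Expand \(U_{2n-1}(1/z)=\sum_k(-1)^k\binom{2n-1-k}{k}(2/z)^{2n-1-2k}\), and similarly for \(U_{2n-2}\). Then \([z]\) picks out even coefficients \([z^{2m}]\cosh(x\arcsin z)\) and odd coefficients \([z^{2m+1}]\sinh(x\arcsin z)\). Using the \({}_2F_1\) expansions already quoted in the proof of \cref{prop:arctanh-alternation}, these are
\[
[z^{2m}]\cosh(x\arcsin z)=\frac{4^m}{(2m)!}\left(\tfrac{ix}{2}\right)_m\left(-\tfrac{ix}{2}\right)_m,
\]
and the analogous expression with \(\bigl(\tfrac{1\pm ix}{2}\bigr)_m\) for \(\sinh\). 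Hence \(P_n\) and \(Q_n\) are finite sums over \(m\) of a binomial-type weight times \((a+iy)_m(a-iy)_m\), with \(y=x/2\). This is the Pochhammer basis of continuous Hahn (equivalently Wilson or continuous dual Hahn) polynomials.

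\emph{Step 2: identification, the main obstacle.} I would rewrite the weights \((-1)^k\binom{2n-1-k}{k}4^{n-k}\) as ratios of Pochhammer symbols in \(m=n-k\), and try to bring the sum to a balanced \({}_3F_2\) or \({}_4F_3\) at argument \(1\). The target is a continuous Hahn polynomial \(p_{N}(y;a,b,\bar a,\bar b)\) with \(a,b>0\) and \(N\) of size about \(2n\); I expect \(a=b\) or \(\{a,b\}=\{\tfrac12,1\}\), in line with the parameters appearing in the arctanh case. If the direct series manipulation is awkward, I would first substitute \(z=\sin u\) in the Cauchy integral for \([z]\), exactly as in \cref{lem:beta-coefficient-collapse}. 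That turns \(P_n\) into a residue of \(\cos(2nv)\cosh(xu)\)-type kernels, where \(\cos v=1/\sin u\), and this may give a cleaner closed form before matching with \cite[\S18.19--18.20]{NIST:DLMF}. Once the parameters are positive, the orthogonality measure \(|\Gamma(a+iy)\Gamma(b+iy)|^2\,dy\) on \(\mathbb R\) is positive. The zeros are therefore real and simple, and the evenness of the weight makes them symmetric under \(y\mapsto-y\).

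\emph{Step 3: signs.} Factor out the trivial zero: \(P_n(x)=x^2\widetilde P_n(x^2)\), since there is no constant term, and \(Q_n(x)=x\widetilde Q_n(x^2)\). Symmetric real simple zeros \(\pm\xi_j\) then give
\[
\widetilde P_n(X)=c_n\prod_{j}(X-\xi_j^2),
\]
with \(\xi_j^2>0\), and likewise for \(\widetilde Q_n\). The coefficients in increasing powers of \(X\) therefore alternate strictly. The top coefficient \(a_{n,n}/(2n)!\) is computed directly, since only the leading term \((2/z)^{2n-1}z^{2n}\) contributes, and it is positive. Hence the sign of \(a_{p,n}\) is \((-1)^{n-p}\), and the same argument gives the sign of \(b_{p,n}\). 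Multiplying by \((-1)^n\) and the positive factors \(2^{2p+1}-1\), \(\zeta(2p+1)\), \(\beta(2p)\) and \(\pi\) gives the pattern \(-,+,-,\ldots\) starting from \(p=1\). The small case \(n=1\) is checked by hand.
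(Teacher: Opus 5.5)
\textbf{Gap: the identification in Step 2 is never carried out.}

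Your plan has the same structure as the paper's proof: generating polynomials $P_n,Q_n$, the ${}_2F_1$ expansions, identification with an orthogonal family with a positive even weight, and then real zeros giving alternation. Steps 1 and 3 are correct. Fixing the overall sign through $a_{n,n}=2^{2n-1}>0$ and $b_{n,n}=2^{2n-2}>0$ is a neat replacement for the paper's leading-coefficient bookkeeping.

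The gap is Step 2. It carries the whole content of the sign claim, and you leave it as an expectation (``I expect $a=b$ or $\{a,b\}=\{\tfrac12,1\}$'', ``$N$ of size about $2n$''). Nothing in Steps 1 and 3 shows that the roots of $\widetilde P_n$ and $\widetilde Q_n$ in $X=x^2$ are real and positive. So the inequalities $(-1)^{n-p}a_{p,n}>0$ and $(-1)^{n-p}b_{p,n}>0$ are not yet proved.

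The missing step is also not just a matter of matching parameters. The basis you reach, $(a+iy)_m(a-iy)_m$ with $y=x/2$, is the Wilson/continuous dual Hahn basis. The continuous Hahn ${}_3F_2$ instead involves a single $(a+iy)_k$ together with $-N$ and $N+2a+2b-1$. Getting from one to the other requires an actual terminating transformation.

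\textbf{What closes the gap.} You need an exact identity. The paper first sums the Chebyshev weights into the balanced series
$P_n(t)=(-1)^{n-1}nt^2\,{}_4F_3(1-n,n+1,1\pm\tfrac{it}{2};\tfrac32,\tfrac32,2;1)$ and $Q_n(t)=(-1)^{n-1}t\,{}_4F_3(1-n,n,\tfrac{1\pm it}{2};\tfrac12,1,\tfrac32;1)$.
It then transforms these into
$P_n(t)=(-1)^{n-1}\tfrac{2^{2n-1}t^2}{(2n)!}S_{n-1}(t^2;\tfrac12,1,1)$ and $Q_n(t)=\tfrac{2^{4n-4}(2n-2)!}{(4n-3)!}\,t\,p_{2n-2}(\tfrac t2;\tfrac12,1,\tfrac12,1)$.
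From there your Step 3 goes through unchanged.

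Your guessed target is consistent with this. By duplication, $|\Gamma(\tfrac12+iy)\Gamma(1+iy)|^2=\pi|\Gamma(1+2iy)|^2$. Using this, one checks that $S_{n-1}(t^2;\tfrac12,1,1)$ and $t^{-1}p_{2n-1}(\tfrac t2;\tfrac12,1,\tfrac12,1)$ are orthogonal for the same measure $X\,dX/\sinh(\pi\sqrt X)$ on $X=t^2>0$. Hence $P_n$ is also a constant multiple of $t\,p_{2n-1}(\tfrac t2;\tfrac12,1,\tfrac12,1)$.

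For $P_n$ there is a shorter route that stays in your Pochhammer basis. The balanced ${}_4F_3$ above is, up to a nonzero constant, the Wilson polynomial $W_{n-1}(t^2/4;1,\tfrac12,\tfrac12,1)$, and its parameters are all positive. That alone gives positive roots in $t^2$, with no transformation. For $Q_n$ the corresponding Wilson parameters are $(\tfrac12,0,\tfrac12,1)$, which sit on the boundary of the standard positivity range. That is why the continuous Hahn form is the convenient one for $Q_n$.
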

\begin{proof}
The two integral identities are \cite[Prop.~2.3]{TallaWaffo2026}.  It therefore remains to
determine the signs of \(a_{p,n}\) and \(b_{p,n}\). Define
\[
P_n(t):=\sum_{p=1}^{n}\frac{a_{p,n}}{(2p)!}t^{2p}, \qquad Q_n(t):=\sum_{p=1}^{n}\frac{b_{p,n}}{(2p-1)!}t^{2p-1}.
\]
By construction,
\[
\begin{cases} \displaystyle P_n(t)=[x]\,U_{2n-1}\!\left(\frac1x\right)\cosh(t\arcsin x),\\[2mm] \displaystyle Q_n(t)=[x]\,U_{2n-2}\!\left(\frac1x\right)\sinh(t\arcsin x).
\end{cases}
\tag{1}
\]
From the explicit Chebyshev expansion
\[
U_m(y)= \sum_{\ell=0}^{\lfloor m/2\rfloor} (-1)^\ell \frac{(m-\ell)!}{\ell!(m-2\ell)!}(2y)^{m-2\ell},
\]
see \cite[Eq.~(18.5.11\_3)]{NIST:DLMF}, one obtains
\[
\begin{cases} \displaystyle U_{2n-1}\!\left(\frac1x\right) = \sum_{q=1}^{n} (-1)^{n-q} \frac{2^{2q-1}(n+q-1)!}{(n-q)!(2q-1)!}\, x^{-(2q-1)},\\[4mm] \displaystyle U_{2n-2}\!\left(\frac1x\right) = \sum_{q=0}^{n-1} (-1)^{n-1-q} \frac{2^{2q}(n+q-1)!}{(n-1-q)!(2q)!}\, x^{-2q}.
\end{cases}
\tag{2}
\]
The standard hypergeometric identities \cite[Eqs.~(15.4.12),(15.4.16)]{NIST:DLMF} give
\[
\cosh(t\arcsin x) = {}_2F_1\!\left( -\frac{it}{2},\frac{it}{2};\frac12;x^2 \right), \qquad \sinh(t\arcsin x) = tx\,{}_2F_1\!\left( \frac{1-it}{2},\frac{1+it}{2};\frac32;x^2 \right).
\]
Hence, for \(q\geq1\),
\[
[x^{2q}]\cosh(t\arcsin x) = \frac{t^2}{(2q)!} \prod_{j=1}^{q-1}(t^2+4j^2),
\]
while, for \(q\geq0\),
\[
[x^{2q+1}]\sinh(t\arcsin x) = \frac{t}{(2q+1)!} \prod_{j=1}^{q}\bigl(t^2+(2j-1)^2\bigr). \tag{3}
\]
Substitution of \((2)\) and \((3)\) into \((1)\) gives
\[
\begin{aligned} P_n(t) &= \sum_{q=1}^{n} (-1)^{n-q} \frac{2^{2q-1}(n+q-1)!} {(n-q)!(2q-1)!(2q)!}\, t^2\prod_{j=1}^{q-1}(t^2+4j^2),\\ Q_n(t) &= \sum_{q=0}^{n-1} (-1)^{n-1-q} \frac{2^{2q}(n+q-1)!} {(n-1-q)!(2q)!(2q+1)!}\, t\prod_{j=1}^{q}\bigl(t^2+(2j-1)^2\bigr).
\end{aligned}
\tag{4}
\]
Writing the products in Pochhammer notation and simplifying the finite sums yields
\[
\begin{cases} \displaystyle P_n(t) = (-1)^{n-1}nt^2\, {}_4F_3\!\left( \begin{matrix} 1-n,\ n+1,\ 1-\frac{it}{2},\ 1+\frac{it}{2}\\ \frac32,\frac32,2 \end{matrix};1 \right),\\[5mm] \displaystyle Q_n(t) = (-1)^{n-1}t\, {}_4F_3\!\left( \begin{matrix} 1-n,\ n,\ \frac{1-it}{2},\ \frac{1+it}{2}\\ \frac12,1,\frac32 \end{matrix};1 \right).
\end{cases}
\tag{5}
\]
The corresponding terminating transformations give
\[
\begin{cases} \displaystyle P_n(t) = (-1)^{n-1} \frac{2^{2n-1}t^2}{(2n)!} \left(\frac32\right)_{n-1}^{\!2} {}_3F_2\!\left( \begin{matrix} 1-n,\ \frac12+it,\ \frac12-it\\ \frac32,\frac32 \end{matrix};1 \right),\\[5mm] \displaystyle Q_n(t) = (-1)^{n-1}t\, {}_3F_2\!\left( \begin{matrix} 2-2n,\ 2n,\ \frac{1+it}{2}\\ 1,\frac32 \end{matrix};1 \right).
\end{cases}
\tag{6}
\]
Recall that the continuous dual Hahn polynomials satisfy
\[
S_m(y^2;a,b,c) = (a+b)_m(a+c)_m {}_3F_2\!\left( \begin{matrix} -m,\ a+iy,\ a-iy\\ a+b,\ a+c \end{matrix};1 \right),
\]
see \cite[Eq.~(18.26.2)]{NIST:DLMF}, whereas the continuous Hahn polynomials admit the
representation
\[
p_m(y;a,b,\bar a,\bar b) = \frac{i^m(a+\bar a)_m(a+\bar b)_m}{m!} {}_3F_2\!\left( \begin{matrix} -m,\ m+2\operatorname{Re}(a+b)-1,\ a+iy\\ a+\bar a,\ a+\bar b \end{matrix};1 \right),
\]
see \cite[Eq.~(18.20.9)]{NIST:DLMF}. Comparing with \((6)\) gives
\[
\begin{cases} \displaystyle P_n(t) = (-1)^{n-1} \frac{2^{2n-1}t^2}{(2n)!}\, S_{n-1}\!\left(t^2;\frac12,1,1\right),\\[4mm] \displaystyle Q_n(t) = \frac{2^{4n-4}(2n-2)!}{(4n-3)!}\, t\, p_{2n-2}\!\left( \frac t2;\frac12,1,\frac12,1 \right).
\end{cases}
\tag{7}
\]
For \(S_{n-1}(X;\frac12,1,1)\), all parameters are positive, so this continuous dual Hahn
family is orthogonal with respect to a positive measure on \(X>0\); see \cite[Table~18.25.1
and Eqs.~(18.25.6)--(18.25.8)]{NIST:DLMF}. Hence its zeros are simple and positive by the
standard zero theorem for orthogonal polynomials \cite[\S18.2(vi)]{NIST:DLMF}. Since its
leading coefficient is \((-1)^{n-1}\) \cite[Table~18.25.2]{NIST:DLMF}, there exist
\(\xi_{1,n},\ldots,\xi_{n-1,n}>0\) such that
\[
S_{n-1}\!\left(X;\frac12,1,1\right) = \prod_{j=1}^{n-1}(\xi_{j,n}-X). \tag{8}
\]
Thus its coefficients, ordered by increasing powers of \(X\), alternate strictly in sign.
For the continuous Hahn polynomial in \((7)\), the parameters
\(a=c=\frac12\) and \(b=d=1\) give a positive even orthogonality
weight on \(\mathbb R\); see
\cite[Eqs.~(18.19.1)--(18.19.3)]{NIST:DLMF}.  Hence its
\(2n-2\) zeros are simple and real.  Because the weight is even,
uniqueness of the orthogonal polynomial of each degree implies the
parity relation
\[
p_r(-x;\tfrac12,1,\tfrac12,1)
=
(-1)^r p_r(x;\tfrac12,1,\tfrac12,1).
\]
Thus the zeros of the even polynomial \(p_{2n-2}\) occur in pairs
\(\pm\rho_{j,n}\), with \(\rho_{j,n}>0\).  Moreover, its leading
coefficient is positive in the normalization
\cite[Eq.~(18.20.9)]{NIST:DLMF}.  Consequently,
\[
p_{2n-2}\!\left(
\frac t2;\frac12,1,\frac12,1
\right)
=
c_n\prod_{j=1}^{n-1}(t^2-\rho_{j,n}^2),
\qquad c_n>0.
\tag{9}
\]
Its coefficients as a polynomial in \(t^2\) therefore alternate
strictly. Since the prefactors in \((7)\) are nonzero and have fixed sign, comparison with
the definitions of \(P_n\) and \(Q_n\) gives, for \(1\leq p\leq n\),
\[
\begin{cases} \operatorname{sgn}(a_{p,n})=(-1)^{n-p},\\[1mm] \operatorname{sgn}(b_{p,n})=(-1)^{n-p}. \end{cases}
\]
Finally, all zeta-, beta-, and powers-of-\(\pi\) factors in the two integral formulae are
positive. Thus the sign of the \(p\)-th summand, after inclusion of the common factor
\((-1)^n\), is \((-1)^n(-1)^{n-p}=(-1)^p\). Hence both sums alternate strictly as
\[
-,+,-,+,\ldots,
\]
as claimed.
\end{proof}

\section{Binary unification through the Lerch transcendent}\label[section]{sec:lerch-unification}

The zeta- and beta-type formulae encountered above occur in parallel
pairs: the parity of the hyperbolic exponent, the coefficient index, the
power of \(\arcsin\), and the special value all change together.  The
Dirichlet lambda function makes this symmetry transparent.  For
\(\Re(s)>1\),
\[
\lambda(s)=\sum_{j=0}^{\infty}\frac{1}{(2j+1)^s}
          =(1-2^{-s})\zeta(s),
\]
see \cite[Eq.~(1.9)]{HuKim2019}, and the defining series of the Lerch
transcendent \cite[Eq.~(25.14.1)]{NIST:DLMF} gives
\[
\Phi\!\left(-1,s,\frac12\right)=2^s\beta(s),
\qquad
\Phi\!\left(1,s,\frac12\right)
=2^s\lambda(s)=(2^s-1)\zeta(s).
\tag{1}
\]
Thus, for the binary parameter \(\varepsilon\in\{0,1\}\),
\[
\Phi\!\left(2\varepsilon-1,\,2p+\varepsilon,\,\frac12\right)
=
\begin{cases}
2^{2p}\beta(2p),&\varepsilon=0,\\[1mm]
(2^{2p+1}-1)\zeta(2p+1),&\varepsilon=1.
\end{cases}
\tag{2}
\]
The identity \((2)\) is the common bridge used below.  The first five
propositions in this section are compact reformulations or immediate
consequences of coefficient formulae already available in
\cite{TallaWaffo2026} or proved in \Cref{sec:coefficient-formulae};
they are recorded to make the common binary structure explicit.  The reciprocal-arctanh result at the end is
derived separately.

\subsection{Binary forms of the coefficient identities}

\begin{proposition}[Unified Lerch form]
\label[proposition]{prop:unified-lerch-shifted}
Let \(n\geq1\), \(0\leq k<n\), and let
\(\varepsilon\in\{0,1\}\). Then
\[
\boxed{
\int_0^\infty
\frac{\sinh((2k+1)x)}
{x\cosh^{\,2n+\varepsilon}x}\,dx
=
\sum_{p=1}^{n}
\frac{
\Phi\!\left(
2\varepsilon-1,\,
2p+\varepsilon,\,
\frac12
\right)}
{\pi^{\,2p-1+\varepsilon}}
[z^{\,2n-1+\varepsilon}]
U_{2k}(z)(\arcsin z)^{\,2p-1+\varepsilon}
}
\]
where \(\Phi\) denotes the Lerch transcendent.  The choices
\(\varepsilon=0\) and \(\varepsilon=1\) recover, respectively, the
Dirichlet beta and zeta--lambda coefficient formulae.
\end{proposition}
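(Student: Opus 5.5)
The plan is to reduce the boxed identity to the two separate coefficient formulae for the shifted hyperbolic family taken from \cite[Prop.~1.2]{TallaWaffo2026}. For general \(0\leq k<n\) they read
\[
\int_0^\infty\frac{\sinh((2k+1)x)}{x\cosh^{2n+1}x}\,dx
=\sum_{p=1}^{n}(2^{2p+1}-1)\,[z^{2n}]\,U_{2k}(z)(\arcsin z)^{2p}\,\frac{\zeta(2p+1)}{\pi^{2p}},
\]
\[
\int_0^\infty\frac{\sinh((2k+1)x)}{x\cosh^{2n}x}\,dx
=\sum_{p=1}^{n}2^{2p}\,[z^{2n-1}]\,U_{2k}(z)(\arcsin z)^{2p-1}\,\frac{\beta(2p)}{\pi^{2p-1}}.
\]
\Cref{prop:alternating-boundary} already quotes these in the boundary case \(k=n-1\).

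The first step is to record the Lerch special values in (2) of this section. These come from the series \(\Phi(q,s,\tfrac12)=\sum_{j\ge0}q^j(j+\tfrac12)^{-s}=2^s\sum_{j\ge0}q^j(2j+1)^{-s}\). For \(q=-1\) this gives \(2^s\beta(s)\), and for \(q=1\) it gives \(2^s\lambda(s)=(2^s-1)\zeta(s)\). Taking \(s=2p\) and \(s=2p+1\) respectively yields exactly the prefactors \(2^{2p}\beta(2p)\) and \((2^{2p+1}-1)\zeta(2p+1)\). Both series converge absolutely because \(2p+\varepsilon\geq2\).

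The second step is to specialize the boxed formula at each \(\varepsilon\) and compare term by term.

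For \(\varepsilon=0\), the power of \(\cosh\) is \(2n\), the extracted degree is \(z^{2n-1}\), the arcsine exponent is \(2p-1\), and the power of \(\pi\) is \(2p-1\). Together with \(\Phi(-1,2p,\tfrac12)=2^{2p}\beta(2p)\), this reproduces the beta formula summand by summand.

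For \(\varepsilon=1\), the exponents become \(2n+1\), \(z^{2n}\), \((\arcsin z)^{2p}\) and \(\pi^{2p}\). Together with \(\Phi(1,2p+1,\tfrac12)=(2^{2p+1}-1)\zeta(2p+1)\), this reproduces the zeta--lambda formula.

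Since \(\varepsilon\in\{0,1\}\) exhausts the cases, the boxed identity follows.

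There is no genuine analytic obstacle; the proof is bookkeeping. The only point needing care is to confirm that the cited result from \cite{TallaWaffo2026} covers the full range \(0\leq k<n\), and not just the boundary case used earlier. I would also check that its normalizations match the ones here: the degree of coefficient extraction, and no stray factor such as a \(2^{-1}\) of the kind appearing in the logarithmic tangent formulae. If the cited statement were only available in a different normalization (for instance with \(U_{2k}(\sin t)\) rewritten via \(\cos((2k+1)t)/\cos t\)), I would convert it using identity (1) from the proof of \cref{prop:alternating-boundary} before matching terms.
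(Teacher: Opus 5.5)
Your proposal is correct and is essentially the paper's proof. Both quote the beta and zeta coefficient formulae of \cite[Prop.~1.2]{TallaWaffo2026} for the full range $0\le k<n$. Both then match the cases $\varepsilon=0$ and $\varepsilon=1$ term by term, using $\Phi(-1,2p,\tfrac12)=2^{2p}\beta(2p)$ and $\Phi(1,2p+1,\tfrac12)=(2^{2p+1}-1)\zeta(2p+1)$. You derive these Lerch values from the defining series, whereas the paper simply invokes its equation (2).
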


\begin{proof}
By \cite[Prop.~1.2]{TallaWaffo2026},
\[
\begin{cases}
\displaystyle
\int_0^\infty
\frac{\sinh((2k+1)x)}{x\cosh^{2n}x}\,dx
=
\sum_{p=1}^{n}
2^{2p}
[z^{2n-1}]U_{2k}(z)(\arcsin z)^{2p-1}
\frac{\beta(2p)}{\pi^{2p-1}},
\\[4mm]
\displaystyle
\int_0^\infty
\frac{\sinh((2k+1)x)}{x\cosh^{2n+1}x}\,dx
=
\sum_{p=1}^{n}
(2^{2p+1}-1)
[z^{2n}]U_{2k}(z)(\arcsin z)^{2p}
\frac{\zeta(2p+1)}{\pi^{2p}}.
\end{cases}
\]
Using \((2)\), the two cases are obtained by setting
\(\varepsilon=0\) and \(\varepsilon=1\), respectively; the exponents
\(2n+\varepsilon\), \(2n-1+\varepsilon\), and
\(2p-1+\varepsilon\) simultaneously reproduce the denominator power,
coefficient index, and power of \(\arcsin z\).
\end{proof}

\begin{proposition}[Unified Lerch form for the even-frequency shifted integrals]
\label[proposition]{prop:unified-lerch-even}
Let \(m,k\in\mathbb N\) with \(m\geq k\geq 1\), and let
\(\varepsilon\in\{0,1\}\). Then
\[
\int_0^\infty
\frac{\sinh(2kx)}
{x\cosh^{\,2m+1+\varepsilon}x}\,dx
=
\sum_{p=1}^{m}
\frac{
\Phi\!\left(
2\varepsilon-1,\,
2p+\varepsilon,\,
\frac12
\right)}
{\pi^{\,2p-1+\varepsilon}}
[z^{\,2m+\varepsilon}]
U_{2k-1}(z)(\arcsin z)^{\,2p-1+\varepsilon}.
\]
\end{proposition}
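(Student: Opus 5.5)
The plan mirrors the proof of \cref{prop:unified-lerch-shifted}. The work lies in supplying the two even-frequency coefficient formulae and then compressing them with identity \((2)\) of this section. The statement is described as a compact reformulation of formulae from \cite{TallaWaffo2026}, so I would first look there for the even-frequency companions of \cite[Prop.~1.2]{TallaWaffo2026}:
\[
\int_0^\infty\frac{\sinh(2kx)}{x\cosh^{2m+1}x}\,dx=\sum_{p=1}^{m}2^{2p}[z^{2m}]U_{2k-1}(z)(\arcsin z)^{2p-1}\frac{\beta(2p)}{\pi^{2p-1}},
\]
and
\[
\int_0^\infty\frac{\sinh(2kx)}{x\cosh^{2m+2}x}\,dx=\sum_{p=1}^{m}(2^{2p+1}-1)[z^{2m+1}]U_{2k-1}(z)(\arcsin z)^{2p}\frac{\zeta(2p+1)}{\pi^{2p}}.
\]
If these are available, the proof is a one-line substitution. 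I would set \(\varepsilon=0\) and \(\varepsilon=1\) and check that the exponents \(2m+1+\varepsilon\), \(2m+\varepsilon\) and \(2p-1+\varepsilon\) reproduce, respectively, the power of \(\cosh\), the coefficient index, and the power of \(\arcsin z\).

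If instead the formulae must be derived, I would reduce to kernels already in hand. Write
\[
\sinh(2kx)=\sinh x\cdot U_{2k-1}(\cosh x)
\]
(this follows from \(U_j(\cosh x)=\sinh((j+1)x)/\sinh x\)). Since \(U_{2k-1}\) is odd, expanding \(U_{2k-1}(\cosh x)\) in powers of \(\cosh x\) together with \(\sinh x/\cosh^{N}x=\tanh x\,\cosh^{1-N}x\) expresses the integrand as a finite combination of \(\sinh x/(x\cosh^{2j+\varepsilon}x)\). Rewriting \(\cosh^{-2j}=(1-\tanh^2)^j\) then yields combinations of \(\sinh^{2l+1}x/(x\cosh^{2l+2}x)\) (for \(\varepsilon=0\)) or of the analogous odd-power kernels (for \(\varepsilon=1\)). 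The \(\varepsilon=0\) kernels are covered by \cref{lem:beta-kernel}, and the \(\varepsilon=1\) kernels by the zeta analogue in \cite[Cor.~1.3]{TallaWaffo2026}. The hypothesis \(k\le m\) ensures that all powers of \(\cosh\) stay in the denominator, so every integral converges at \(0\) and \(\infty\).

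The main obstacle is the subsequent coefficient collapse: turning the sum \(\sum_j c_j[z^{2j+\dots}](z^2-1)^{j}(\arcsin z)^{r}\) into the single extraction \([z^{2m+\varepsilon}]U_{2k-1}(z)(\arcsin z)^{r}\). I would handle this as in \cref{lem:beta-coefficient-collapse}. Write \((z^2-1)^j=z^{2j}(1-z^{-2})^j\), so that every term becomes an extraction of one fixed coefficient. Then reassemble the polynomial in \(1-z^{-2}\) as \(z^{-(2m+\varepsilon)}\) times the polynomial coming from \(U_{2k-1}\), using \(\cosh^2 x\leftrightarrow 1/(1-z^{-2})\) under \(z=\sin u\), \(\tanh x=i\cot u\). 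This is where the parity \(2k-1\), paired with the index shift \(2m+\varepsilon\), must be tracked carefully.

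Finally, applying \((2)\) converts the prefactors \(2^{2p}\beta(2p)\) and \((2^{2p+1}-1)\zeta(2p+1)\) into \(\Phi(2\varepsilon-1,2p+\varepsilon,\tfrac12)\). The power \(\pi^{2p-1+\varepsilon}\) matches in both cases, which gives the boxed unified form.
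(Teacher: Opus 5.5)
\textbf{Your first branch.} It is circular as written. The two displayed ``even-frequency companions'' are exactly the cases $\varepsilon=0$ and $\varepsilon=1$ of the statement. The paper never says they appear in \cite{TallaWaffo2026}; it derives them.

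\textbf{Your second branch.} The reduction $\sinh(2kx)=\sinh x\,U_{2k-1}(\cosh x)$ is sound. So is the first expansion into $\sinh x/(x\cosh^{2(m-i)+\varepsilon}x)$ with $m-i\ge1$, and these pieces do converge. The two steps after it fail as written.

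\emph{The $\varepsilon=1$ rewrite diverges.} Here $\sinh x/\cosh^{2j+1}x=\tanh x\,\cosh^{-2j}x$. Applying $\cosh^{-2j}=(1-\tanh^2)^j$ literally gives $\sum_l\binom{j}{l}(-1)^l\tanh^{2l+1}x/x$. Each of these integrals diverges at $\infty$, so your claim that every integral converges fails. You must keep a factor $\operatorname{sech}^2x$, which gives the kernels $\sinh^{2l+1}x/\cosh^{2l+3}x$. These are the case $n=l+1$ of \cref{prop:unified-lerch-sinh-powers}.

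\emph{The collapse is not carried out, and your dictionary is wrong.} You yourself call the collapse the main obstacle. Under $\tanh^2x\mapsto 1-z^{-2}$ one has $\cosh^2x=1/(1-\tanh^2x)\mapsto z^2$. Your $1/(1-z^{-2})$ is the image of $\coth^2x$. With the correct dictionary the collapse is one line:
\begin{itemize}
\item write $U_{2k-1}(y)=yV(y^2)$;
\item the kernel polynomial is $A(w)=(1-w)^{m-1}V\bigl(1/(1-w)\bigr)$;
\item hence $A(1-z^{-2})=z^{1-2m}U_{2k-1}(z)$;
\item so $[z^{1+\varepsilon}]\,z^{1-2m}U_{2k-1}(z)(\arcsin z)^r=[z^{2m+\varepsilon}]\,U_{2k-1}(z)(\arcsin z)^r$.
\end{itemize}

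\textbf{A simpler repair, and comparison with the paper.} The detour through odd-power kernels is unnecessary. Write $U_{2k-1}(y)=\sum_{i=0}^{k-1}c_iy^{2i+1}$. Each piece $\sinh x/(x\cosh^{2(m-i)+\varepsilon}x)$ is already the case $k=0$, $n=m-i$ of \cref{prop:unified-lerch-shifted}, since $U_0=1$. Then $[z^{2(m-i)-1+\varepsilon}](\arcsin z)^r=[z^{2m+\varepsilon}]\,z^{2i+1}(\arcsin z)^r$. Summing against the $c_i$ reassembles $U_{2k-1}(z)$. The $p$-range extends to $m$ because the added terms vanish by degree.

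The paper avoids any collapse. It writes $2\cosh x\sinh(2kx)=\sinh((2k+1)x)+\sinh((2k-1)x)$ and applies \cref{prop:unified-lerch-shifted} with $n=m+1$ at the indices $k$ and $k-1$. It then uses $U_{2k}+U_{2k-2}=2zU_{2k-1}$ to lower the extraction index by one, and drops the $p=m+1$ term by degree.

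Your repaired route needs only the $k=0$ instance of the shifted formula plus a linear reassembly. The paper's route uses the full odd-frequency formula but needs nothing beyond one Chebyshev recurrence.
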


\begin{proof}
The result follows directly from \cref{prop:unified-lerch-shifted}.
Indeed,
\[
2\cosh x\,\sinh(2kx)
=
\sinh((2k+1)x)+\sinh((2k-1)x).
\]
Hence, with \(n=m+1\), the integral on the left is one half of the sum of
the two shifted integrals in \cref{prop:unified-lerch-shifted}
corresponding to the indices \(k\) and \(k-1\). The Chebyshev recurrence
\[
U_{2k}(z)+U_{2k-2}(z)=2z\,U_{2k-1}(z)
\]
then lowers the coefficient index by one and gives
\[
[z^{2m+\varepsilon}]
U_{2k-1}(z)(\arcsin z)^{2p-1+\varepsilon}.
\]
The term \(p=m+1\) inherited from \cref{prop:unified-lerch-shifted}
vanishes by degree and parity, so the sum terminates at \(p=m\). This proves
the stated formula. The condition \(m\geq k\geq 1\) ensures that the shifted indices \(k\) and \(k-1\) are both admissible.
\end{proof}

\begin{proposition}[Unified Lerch form for odd powers of \(\sinh\)]
\label[proposition]{prop:unified-lerch-sinh-powers}
Let \(n\geq1\), \(0\leq k<n\), and let
\(\varepsilon\in\{0,1\}\). Then
\[
\int_0^\infty
\frac{\sinh^{2k+1}x}
{x\cosh^{\,2n+\varepsilon}x}\,dx
=
\sum_{p=1}^{n}
\frac{
\Phi\!\left(
2\varepsilon-1,\,
2p+\varepsilon,\,
\frac12
\right)}
{\pi^{\,2p-1+\varepsilon}}
[z^{\,2n-1+\varepsilon}]
(z^2-1)^k
(\arcsin z)^{\,2p-1+\varepsilon}.
\]
\end{proposition}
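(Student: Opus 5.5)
The plan is the same as for \cref{prop:unified-lerch-shifted}: I will quote the two coefficient formulae for odd powers of \(\sinh\) from \cite{TallaWaffo2026} and rewrite their special values with the bridge identity \((2)\). Specifically, I expect Corollary~1.3 of \cite{TallaWaffo2026} to give, for \(n\geq1\) and \(0\leq k<n\),
\[
\int_0^\infty\frac{\sinh^{2k+1}x}{x\cosh^{2n}x}\,dx
=\sum_{p=1}^{n}2^{2p}\frac{\beta(2p)}{\pi^{2p-1}}[z^{2n-1}](z^2-1)^k(\arcsin z)^{2p-1}.
\]
This is the beta identity of which \cref{lem:beta-kernel} is the boundary case \(n=k+1\). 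The same corollary should also give the zeta companion
\[
\int_0^\infty\frac{\sinh^{2k+1}x}{x\cosh^{2n+1}x}\,dx
=\sum_{p=1}^{n}(2^{2p+1}-1)\frac{\zeta(2p+1)}{\pi^{2p}}[z^{2n}](z^2-1)^k(\arcsin z)^{2p}.
\]
The arctanh formulae in \cref{prop:arctanh-alternation} are the case \(k=n-1\) of this corollary, which supports that it contains both parities.

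With these two identities in hand, I will substitute \(\varepsilon=0\) and \(\varepsilon=1\) into the boxed formula and check four things simultaneously:
\begin{itemize}
\item the denominator exponent \(2n+\varepsilon\);
\item the coefficient index \(2n-1+\varepsilon\);
\item the \(\arcsin\) exponent \(2p-1+\varepsilon\);
\item the power \(\pi^{2p-1+\varepsilon}\).
\end{itemize}
Identity \((2)\) then turns \(\Phi(2\varepsilon-1,2p+\varepsilon,\tfrac12)\) into \(2^{2p}\beta(2p)\) or \((2^{2p+1}-1)\zeta(2p+1)\), as appropriate. This step is purely bookkeeping.

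The main obstacle is making sure the cited corollary really covers the full range \(0\leq k<n\) in both parities, with upper summation limit \(n\). If only the beta version for general \(n\) is available, I will derive the zeta version from \cref{prop:unified-lerch-shifted}. I would expand \(\sinh^{2k+1}x\) as a linear combination of \(\sinh((2j+1)x)\) for \(0\leq j\leq k\), using the binomial expansion of \((e^x-e^{-x})^{2k+1}\). The condition \(j\leq k<n\) keeps every index admissible, so \cref{prop:unified-lerch-shifted} applies termwise. It then remains to identify the resulting combination of the \(U_{2j}\) with \((z^2-1)^k\) inside the coefficient extraction.

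For that identification I would use \(U_{2j}(\sin t)=(-1)^j\cos((2j+1)t)/\cos t\), taken from the proof of \cref{prop:alternating-boundary}. It gives
\[
\sum_j c_j U_{2j}(\sin t)=(-1)^k\frac{\cos^{2k+1}t}{\cos t}=(-1)^k\cos^{2k}t=(\sin^2t-1)^k,
\]
once the coefficients \(c_j\) from the \(\sinh\) expansion are matched with the analogous cosine expansion. In that expansion, \(\sinh^{2k+1}\) and \(\cos^{2k+1}\) differ only by the sign \((-1)^{k-j}\) on each term. This matches \(\sum_j c_jU_{2j}(z)\) with \((z^2-1)^k\) as polynomials, and the coefficient extraction then carries over unchanged for both values of \(\varepsilon\).
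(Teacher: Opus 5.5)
Your proposal is correct and is essentially the paper's proof: the paper likewise quotes the beta and zeta cases of \cite[Cor.~1.3]{TallaWaffo2026} and reads the exponents $2n+\varepsilon$, $2n-1+\varepsilon$, $2p-1+\varepsilon$ through $\varepsilon$ using the bridge identity $(2)$. Your fallback derivation from \cref{prop:unified-lerch-shifted} via the polynomial identity $\sum_j c_jU_{2j}(z)=(z^2-1)^k$ is also valid, but it is not needed, since the cited corollary already covers both parities for all $0\le k<n$.
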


\begin{proof}
By \cite[Cor.~1.3]{TallaWaffo2026}, the corresponding zeta and beta
coefficient formulae are
\[
\begin{cases}
\displaystyle
\int_0^\infty
\frac{\sinh^{2k+1}x}{x\cosh^{2n}x}\,dx
=
\sum_{p=1}^{n}
2^{2p}
[z^{2n-1}](z^2-1)^k(\arcsin z)^{2p-1}
\frac{\beta(2p)}{\pi^{2p-1}},
\\[4mm]
\displaystyle
\int_0^\infty
\frac{\sinh^{2k+1}x}{x\cosh^{2n+1}x}\,dx
=
\sum_{p=1}^{n}
(2^{2p+1}-1)
[z^{2n}](z^2-1)^k(\arcsin z)^{2p}
\frac{\zeta(2p+1)}{\pi^{2p}}.
\end{cases}
\]
Applying \((2)\) and reading the three exponents through
\(\varepsilon\) gives the asserted formula.
\end{proof}

\begin{proposition}[Unified Lerch form for the logarithmic tangent integrals]
\label[proposition]{prop:unified-lerch-logtan}
Let \(n\geq1\) and \(\varepsilon\in\{0,1\}\). Then
\[
\int_0^{\pi/4}
\frac{
\cos\!\left(
(4n-2+2\varepsilon)x-\frac{\pi\varepsilon}{2}
\right)}
{\ln(\tan x)}\,dx
=
\frac{(-1)^n}{2}
\sum_{p=1}^{n}
\frac{
\Phi\!\left(
2\varepsilon-1,\,
2p+\varepsilon,\,
\frac12
\right)}
{\pi^{\,2p-1+\varepsilon}}
[x]\,
U_{2n-2+\varepsilon}\!\left(\frac1x\right)
(\arcsin x)^{\,2p-1+\varepsilon}.
\]
\end{proposition}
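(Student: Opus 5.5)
The plan is the same as for the preceding binary propositions: take the two logarithmic tangent identities already stated in \cref{prop:logtan-alternation} (which are \cite[Prop.~2.3]{TallaWaffo2026}), and check that the single formula above reduces to each of them under the specializations \(\varepsilon=0\) and \(\varepsilon=1\), using the bridge identity \((2)\).

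\emph{Case \(\varepsilon=0\).} The integrand becomes \(\cos((4n-2)x)/\ln(\tan x)\). On the right, \((2)\) gives \(\Phi(-1,2p,\tfrac12)=2^{2p}\beta(2p)\), the power of \(\pi\) is \(\pi^{2p-1}\), and the coefficient is \([x]\,U_{2n-2}(1/x)(\arcsin x)^{2p-1}=b_{p,n}\). The right-hand side is therefore
\[
\frac{(-1)^n}{2}\sum_{p=1}^n 2^{2p}b_{p,n}\frac{\beta(2p)}{\pi^{2p-1}}
=(-1)^n\sum_{p=1}^n 2^{2p-1}b_{p,n}\frac{\beta(2p)}{\pi^{2p-1}}.
\]
This is exactly the beta identity of \cref{prop:logtan-alternation}; the factor \(\tfrac12\) is absorbed into \(2^{2p}\to2^{2p-1}\).

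\emph{Case \(\varepsilon=1\).} Here \(\cos(4nx-\pi/2)=\sin(4nx)\), so the left side is \(\int_0^{\pi/4}\sin(4nx)/\ln(\tan x)\,dx\). By \((2)\), \(\Phi(1,2p+1,\tfrac12)=(2^{2p+1}-1)\zeta(2p+1)\), the power of \(\pi\) is \(\pi^{2p}\), and the coefficient is \([x]\,U_{2n-1}(1/x)(\arcsin x)^{2p}=a_{p,n}\). So the right-hand side is \(\frac{(-1)^n}{2}\sum_p(2^{2p+1}-1)a_{p,n}\zeta(2p+1)/\pi^{2p}\), which is the zeta identity of \cref{prop:logtan-alternation} verbatim.

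There is no real analytic obstacle, since both identities are imported. The only point needing care is bookkeeping of normalizations. The prefactor \((-1)^n/2\) must be common to both cases, and this works only because the beta case's \(2^{2p-1}\) combines with the \(2^{2p}\) coming from \(\Phi(-1,2p,\tfrac12)\). The phase shift \(-\pi\varepsilon/2\) must convert cosine into sine in the zeta case. The Chebyshev index \(2n-2+\varepsilon\) and the \(\arcsin\) exponent \(2p-1+\varepsilon\) must match the definitions of \(b_{p,n}\) and \(a_{p,n}\) respectively. I would verify each of these explicitly in the two cases and then conclude, exactly as in \cref{prop:unified-lerch-sinh-powers}.
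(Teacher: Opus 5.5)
Your proposal is correct and matches the paper's proof. Both import the two logarithmic tangent identities from the earlier work, turn the numerator into $\sin(4nx)$ via $\cos(\theta-\pi/2)=\sin\theta$, and use the bridge identity $(2)$, so that the common prefactor $\tfrac12$ turns $2^{2p}\beta(2p)$ into $2^{2p-1}\beta(2p)$ in the beta case and reproduces the zeta case verbatim.
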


\begin{proof}
By \cite[Prop.~3.3]{TallaWaffo2026},
\[
\begin{cases}
\displaystyle
\int_0^{\pi/4}
\frac{\cos((4n-2)x)}{\ln(\tan x)}\,dx
=
(-1)^n
\sum_{p=1}^{n}
2^{2p-1}
[x]\,U_{2n-2}\!\left(\frac1x\right)
(\arcsin x)^{2p-1}
\frac{\beta(2p)}{\pi^{2p-1}},
\\[4mm]
\displaystyle
\int_0^{\pi/4}
\frac{\sin(4nx)}{\ln(\tan x)}\,dx
=
\frac{(-1)^n}{2}
\sum_{p=1}^{n}
(2^{2p+1}-1)
[x]\,U_{2n-1}\!\left(\frac1x\right)
(\arcsin x)^{2p}
\frac{\zeta(2p+1)}{\pi^{2p}}.
\end{cases}
\]
The numerator is unified by
\[
\cos\!\left(
(4n-2+2\varepsilon)x-\frac{\pi\varepsilon}{2}
\right)
=
\begin{cases}
\cos((4n-2)x),&\varepsilon=0,\\
\sin(4nx),&\varepsilon=1.
\end{cases}
\]
Together with \((2)\), the common factor \(1/2\) gives
\(2^{2p-1}\beta(2p)\) when \(\varepsilon=0\) and the zeta coefficient
when \(\varepsilon=1\), proving the formula.
\end{proof}

\begin{proposition}[Unified Lerch form for the \(\tanh\) coefficient formulae]
\label[proposition]{prop:unified-lerch-tanh}
Let \(m\geq n\geq1\) with \(m+n\) even, and let
\(\varepsilon\in\{0,1\}\). Then
\[
\int_0^\infty
\frac{\tanh^{m+\varepsilon}x}
{x^{n+\varepsilon}\cosh^{\,1-\varepsilon}x}\,dx
=
(-1)^{\frac{m-n}{2}}
\sum_{p=\lceil n/2\rceil}^{(m+n)/2}
\binom{2p-1+\varepsilon}{n-1+\varepsilon}
\frac{
\Phi\!\left(
2\varepsilon-1,\,
2p+\varepsilon,\,
\frac12
\right)}
{\pi^{\,2p-1+\varepsilon}}
[u^{m+n-2p}]
\left(\frac{u}{\sin u}\right)^{1-\varepsilon}
(u\cot u)^{m+\varepsilon}.
\]
\end{proposition}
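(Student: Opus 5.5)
The plan is to verify the statement separately for the two values of the binary parameter, since each specialization should reduce to a formula already available. For \(\varepsilon=0\), the left-hand side becomes \(\int_0^\infty \tanh^m x/(x^n\cosh x)\,dx\) and the binomial factor becomes \(\binom{2p-1}{n-1}\). The kernel becomes \(\frac{u}{\sin u}(u\cot u)^m\), and by identity \((2)\) the Lerch factor equals \(\Phi(-1,2p,\frac12)=2^{2p}\beta(2p)\) over \(\pi^{2p-1}\). Term by term this is exactly the right-hand side of \cref{prop:sech-tanh-integral}, under the same hypotheses \(m\geq n\geq1\), \(m+n\) even, and with the same summation range \(\lceil n/2\rceil\leq p\leq (m+n)/2\). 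So the \(\varepsilon=0\) case is an immediate rewriting of that proposition.

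For \(\varepsilon=1\), the left-hand side is \(\int_0^\infty \tanh^{m+1}x/x^{n+1}\,dx\), since \(\cosh^{0}x=1\). The binomial factor is \(\binom{2p}{n}\) and the kernel is \((u\cot u)^{m+1}\). By \((2)\), the Lerch factor is \(\Phi(1,2p+1,\frac12)=(2^{2p+1}-1)\zeta(2p+1)\), divided by \(\pi^{2p}\). I will then invoke the zeta-type \(\tanh\) formula of \cite{TallaWaffo2026}, recalled in the introduction, which has exactly these ingredients, the same range of \(p\), the same sign \((-1)^{(m-n)/2}\), and the same parity hypothesis.

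The main, though mild, obstacle is bookkeeping. In the \(\varepsilon=1\) case the coefficient index \(m+n-2p\) and the summation limits must match the cited zeta formula exactly, with no shift: the substitution \(m\mapsto m+\varepsilon\), \(n\mapsto n+\varepsilon\) in the integrand leaves the index \(m+n-2p\) unchanged. I will also check that \(\binom{2p-1+\varepsilon}{n-1+\varepsilon}\) specializes correctly, and that \(\lceil n/2\rceil\) is the lower limit in both sources. With the two specializations matched, the unified formula follows.
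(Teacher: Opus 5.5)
Your proposal is correct and follows essentially the same route as the paper: the case $\varepsilon=0$ is read off from the beta-type $\tanh$ formula of Section 1, and the case $\varepsilon=1$ from the zeta-type formula of \cite{TallaWaffo2026}. In both cases identity $(2)$ converts the Lerch factor, and the binomial, kernel, and integrand specializations match term by term with the same summation range and sign.
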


\begin{proof}
For \(\varepsilon=0\), \cref{prop:sech-tanh-integral} gives
\[
\int_0^\infty
\frac{\tanh^m x}{x^n\cosh x}\,dx
=
(-1)^{(m-n)/2}
\sum_{p=\lceil n/2\rceil}^{(m+n)/2}
2^{2p}
\binom{2p-1}{n-1}
\frac{\beta(2p)}{\pi^{2p-1}}
[u^{m+n-2p}]
\frac{u}{\sin u}(u\cot u)^m.
\]
The zeta-type companion proved in \cite{TallaWaffo2026} is
\[
\int_0^\infty
\frac{\tanh^{m+1}x}{x^{n+1}}\,dx
=
(-1)^{(m-n)/2}
\sum_{p=\lceil n/2\rceil}^{(m+n)/2}
\binom{2p}{n}
(2^{2p+1}-1)
\frac{\zeta(2p+1)}{\pi^{2p}}
[u^{m+n-2p}](u\cot u)^{m+1}.
\]
Formula \((2)\) combines the special-value factors, while
\[
\binom{2p-1+\varepsilon}{n-1+\varepsilon}
=
\begin{cases}
\binom{2p-1}{n-1},&\varepsilon=0,\\
\binom{2p}{n},&\varepsilon=1,
\end{cases}
\]
and
\[
\left(\frac{u}{\sin u}\right)^{1-\varepsilon}
(u\cot u)^{m+\varepsilon}
=
\begin{cases}
\displaystyle\frac{u}{\sin u}(u\cot u)^m,&\varepsilon=0,\\[2mm]
(u\cot u)^{m+1},&\varepsilon=1.
\end{cases}
\]
The left-hand side is unified in the same way, which proves the result.
\end{proof}

\begin{proposition}[Unified Lerch form for logarithmic polylogarithm integrals]
Let \(n\geq 1\) and let \(\varepsilon\in\{0,1\}\). Then
\[
\int_0^1
\frac{
\Im\!\left(
i^\varepsilon
\operatorname{Li}_{-2n-\varepsilon}
\!\left(i^{\,1+\varepsilon}x^{\,1+\varepsilon}\right)
\right)
}{x}
\ln\ln\frac1x\,dx
=
(-1)^{n-1+\varepsilon}
\frac{(2n-1+\varepsilon)!}
{2^{\,1+(2n+1)\varepsilon}\pi^{\,2n-1+\varepsilon}}
\Phi\!\left(
2\varepsilon-1,
2n+\varepsilon,
\frac12
\right).
\]
The choices \(\varepsilon=0\) and \(\varepsilon=1\) recover, respectively,
the Dirichlet-beta and zeta--lambda identities.
\end{proposition}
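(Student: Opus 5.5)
The plan is to reduce the integral to the derivative at $s=1$ of a Mellin transform, identify that transform with a Dirichlet series (Dirichlet $\beta$ for $\varepsilon=0$, the alternating zeta $\eta(s)=(1-2^{1-s})\zeta(s)$ for $\varepsilon=1$), and evaluate the derivative by the functional equations. The substitution $x=e^{-t}$ turns the left-hand side into $\int_0^\infty f_\varepsilon(e^{-t})\ln t\,dt$, where $f_\varepsilon(x):=\Im\bigl(i^\varepsilon\operatorname{Li}_{-2n-\varepsilon}(i^{1+\varepsilon}x^{1+\varepsilon})\bigr)$. This is $F_\varepsilon'(1)$ for
\[
F_\varepsilon(s):=\int_0^\infty t^{s-1}f_\varepsilon(e^{-t})\,dt .
\]
Since $\operatorname{Li}_{-N}$ is a rational function with poles only at $1$, the function $f_\varepsilon$ is rational on $[0,1]$. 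Its poles lie at $x=-i$ (for $\varepsilon=0$) and $x=\pm i$ (for $\varepsilon=1$), so $f_\varepsilon$ is bounded on $[0,1]$ and $f_\varepsilon(x)=O(x)$ as $x\to0$. Hence $F_\varepsilon$ is holomorphic for $\Re s>0$, and differentiation under the integral sign at $s=1$ is legitimate.

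Next I expand $f_\varepsilon$ in series. For $\varepsilon=0$, $\Im\operatorname{Li}_{-2n}(ix)=\sum_{j\ge0}(-1)^j(2j+1)^{2n}x^{2j+1}$. For $\varepsilon=1$, $\Im\bigl(i\operatorname{Li}_{-2n-1}(-x^2)\bigr)=\sum_{k\ge1}(-1)^k k^{2n+1}x^{2k}$. For $\Re s>2n+2$ termwise integration is justified by absolute convergence, using $\int_0^\infty t^{s-1}e^{-ckt}\,dt=\Gamma(s)(ck)^{-s}$. This gives
\[
F_0(s)=\Gamma(s)\,\beta(s-2n),\qquad F_1(s)=-\Gamma(s)\,2^{-s}\,\eta(s-2n-1).
\]
Both sides are holomorphic for $\Re s>0$, using the entire continuations of $\beta$ and $\eta$. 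By analytic continuation the identities therefore hold near $s=1$.

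Differentiating at $s=1$ uses the trivial zeros $\beta(1-2n)=0$ and $\eta(-2n)=0$. These kill the terms containing $\Gamma'(1)$ and $\ln 2$, so
\[
F_0'(1)=\beta'(1-2n),\qquad F_1'(1)=-\tfrac12\,\eta'(-2n)=-\tfrac12\,(1-2^{2n+1})\,\zeta'(-2n).
\]

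The remaining step is the evaluation of these derivatives, and it is the only place needing care; it is a sign and power-of-$2$ bookkeeping check rather than a conceptual difficulty. For $\beta$ I differentiate the functional equation $\beta(1-s)=(2/\pi)^s\sin(\pi s/2)\Gamma(s)\beta(s)$ at $s=2n$. Since $\sin(\pi n)=0$, only the derivative of the sine survives, giving
\[
\beta'(1-2n)=(-1)^{n-1}\frac{2^{2n-1}(2n-1)!}{\pi^{2n-1}}\,\beta(2n).
\]
For $\zeta$ I use the classical value $\zeta'(-2n)=(-1)^n(2n)!\,\zeta(2n+1)/\bigl(2(2\pi)^{2n}\bigr)$, which gives
\[
F_1'(1)=(-1)^n\frac{(2n)!}{2^{2n+2}\pi^{2n}}(2^{2n+1}-1)\zeta(2n+1).
\]
Finally, I compare with the right-hand side via identity $(2)$, namely $\Phi(-1,2n,\frac12)=2^{2n}\beta(2n)$ and $\Phi(1,2n+1,\frac12)=(2^{2n+1}-1)\zeta(2n+1)$. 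The prefactor $(2n-1+\varepsilon)!/(2^{1+(2n+1)\varepsilon}\pi^{2n-1+\varepsilon})$ with sign $(-1)^{n-1+\varepsilon}$ reproduces exactly these two values, completing the proof.
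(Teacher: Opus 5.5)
Your proof is correct, but it takes a genuinely different route from the paper. The paper evaluates nothing. It imports the two base identities (for $\Im\operatorname{Li}_{-2n}(ix)$ and $\operatorname{Li}_{-2n-1}(-x^2)$) from \cite{TallaWaffo2026}. It then checks that the kernel $\Im\bigl(i^\varepsilon\operatorname{Li}_{-2n-\varepsilon}(i^{1+\varepsilon}x^{1+\varepsilon})\bigr)$ specializes to them, and matches the constants through identity $(2)$. That is exactly your final paragraph.

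Everything before that in your proposal is an independent derivation of the cited evaluations, and it checks out:
\begin{itemize}
\item $x=e^{-t}$ turns the $\ln\ln(1/x)$ weight into $F_\varepsilon'(1)$.
\item The Mellin transforms $\Gamma(s)\beta(s-2n)$ and $-\Gamma(s)2^{-s}\eta(s-2n-1)$ are right.
\item The trivial zeros $\beta(1-2n)=\eta(-2n)=0$ remove the $\Gamma'(1)$ and $\ln 2$ terms.
\item The differentiated functional equation of $\beta$, together with the classical value of $\zeta'(-2n)$, yields precisely $(-1)^{n-1}2^{2n-1}(2n-1)!\,\beta(2n)/\pi^{2n-1}$ and $(-1)^n(2n)!\,(2^{2n+1}-1)\zeta(2n+1)/(2^{2n+2}\pi^{2n})$.
\end{itemize}

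The paper's argument is a short consistency check, in line with the section's stated aim of repackaging known formulae, but it rests entirely on the external reference. Yours makes the proposition self-contained modulo standard facts about $\beta$ and $\zeta$. It also shows the mechanism: a derivative taken at a trivial zero is converted into a positive-argument special value by the functional equation. To make the two cases run fully in parallel, you could obtain $\zeta'(-2n)$ by differentiating the zeta functional equation at $s=2n+1$, exactly as you did for $\beta$ at $s=2n$.

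One harmless slip: for $\varepsilon=0$ the real-variable function $\Im\operatorname{Li}_{-2n}(ix)=\frac{1}{2i}\bigl(\operatorname{Li}_{-2n}(ix)-\operatorname{Li}_{-2n}(-ix)\bigr)$ has poles at both $x=\pm i$, not only at $-i$. Boundedness on $[0,1]$, which is all you use, is unaffected.
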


\begin{proof}
By \cite{TallaWaffo2026}, one has
\[
\begin{cases}
\displaystyle
\int_0^1
\frac{\Im\!\left(\operatorname{Li}_{-2n}(ix)\right)}{x}
\ln\ln\frac1x\,dx
=
(-1)^{n-1}
2^{2n-1}(2n-1)!
\frac{\beta(2n)}{\pi^{2n-1}},
\\[4mm]
\displaystyle
\int_0^1
\frac{\operatorname{Li}_{-2n-1}(-x^2)}{x}
\ln\ln\frac1x\,dx
=
(-1)^n
\frac{(2^{2n+1}-1)(2n)!}{2^{2n+2}}
\frac{\zeta(2n+1)}{\pi^{2n}}.
\end{cases}
\]

The two polylogarithmic kernels are unified by
\[
\Im\!\left(
i^\varepsilon
\operatorname{Li}_{-2n-\varepsilon}
\!\left(i^{\,1+\varepsilon}x^{\,1+\varepsilon}\right)
\right)
=
\begin{cases}
\displaystyle
\Im\!\left(\operatorname{Li}_{-2n}(ix)\right),
& \varepsilon=0,
\\[2mm]
\displaystyle
\operatorname{Li}_{-2n-1}(-x^2),
& \varepsilon=1.
\end{cases}
\]
Indeed, the first case is immediate, while for \(\varepsilon=1\),
\[
i^{\,1+\varepsilon}x^{\,1+\varepsilon}
=i^2x^2=-x^2,
\]
and \(\operatorname{Li}_{-2n-1}(-x^2)\) is real for \(0<x<1\), so that
\[
\Im\!\left(
i\,\operatorname{Li}_{-2n-1}(-x^2)
\right)
=
\operatorname{Li}_{-2n-1}(-x^2).
\]

Moreover, the Lerch transcendent satisfies
\[
\Phi\!\left(
2\varepsilon-1,
2n+\varepsilon,
\frac12
\right)
=
\begin{cases}
\displaystyle
2^{2n}\beta(2n),
& \varepsilon=0,
\\[2mm]
\displaystyle
(2^{2n+1}-1)\zeta(2n+1),
& \varepsilon=1.
\end{cases}
\]
Hence, for \(\varepsilon=0\),
\[
\frac{(-1)^{n-1}(2n-1)!}
{2\pi^{2n-1}}
\Phi\!\left(-1,2n,\frac12\right)
=
(-1)^{n-1}
2^{2n-1}(2n-1)!
\frac{\beta(2n)}{\pi^{2n-1}},
\]
whereas for \(\varepsilon=1\),
\[
\frac{(-1)^n(2n)!}
{2^{2n+2}\pi^{2n}}
\Phi\!\left(1,2n+1,\frac12\right)
=
(-1)^n
\frac{(2^{2n+1}-1)(2n)!}{2^{2n+2}}
\frac{\zeta(2n+1)}{\pi^{2n}}.
\]
Thus the two identities are precisely the binary specializations
\(\varepsilon=0\) and \(\varepsilon=1\) of the asserted formula.
\end{proof}

\subsection{Reciprocal arctanh integrals}

The preceding propositions reorganize known coefficient identities.  We
now derive a further pair directly from the two \(\tanh\)-coefficient
formulae and then place it in the same Lerch scheme.

\begin{proposition}[Unified Lerch form for reciprocal arctanh integrals]
\label[proposition]{prop:arctanh-lerch-unification}
Let \(m,n\) be positive integers satisfying
\[
m\ge n\ge 1,
\qquad
m+n\equiv 0 \pmod 2,
\]
and let \(\varepsilon\in\{0,1\}\). Then
\[
\begin{aligned}
\int_0^1
\frac{x^m}
{(1-x^2)^{(1-\varepsilon)/2}\operatorname{arctanh}^n x}\,dx
&=
(-1)^{(m-n)/2}
\sum_{p=\lceil n/2\rceil}^{(m+n)/2}
\binom{2p-1+\varepsilon}{n-1}
\\
&\quad\times
\frac{
\Phi\!\left(
2\varepsilon-1,\,
2p+\varepsilon,\,
\frac12
\right)}
{\pi^{\,2p-1+\varepsilon}}
\\
&\quad\times
[z^{\,m+\varepsilon}]
(1-z^2)^{(m-1)/2}
(\arcsin z)^{\,2p-n+\varepsilon}.
\end{aligned}
\]
The choices \(\varepsilon=0\) and \(\varepsilon=1\) yield,
respectively, the Dirichlet-beta and zeta--lambda identities, in
agreement with the convention used throughout this section.
\end{proposition}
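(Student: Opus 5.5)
The plan is to reduce both cases to the two $\tanh$-coefficient formulae, namely \cref{prop:sech-tanh-integral} for $\varepsilon=0$ and the zeta-type formula from \cite{TallaWaffo2026} for $\varepsilon=1$. We substitute $x=\tanh t$ and then convert the $u\cot u$ coefficient kernels into $\arcsin$ coefficients by the change of variable $z=\sin u$ inside Cauchy's formula, in the same spirit as the proof of \cref{lem:beta-coefficient-collapse}.

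\textbf{Step 1 (substitution).} Put $x=\tanh t$. Then $dx=\cosh^{-2}t\,dt$ and $(1-x^2)^{(1-\varepsilon)/2}=\cosh^{-(1-\varepsilon)}t$, so the left-hand side becomes
\[
\int_0^\infty \frac{\tanh^m t}{t^n\cosh^{1+\varepsilon}t}\,dt .
\]
For $\varepsilon=0$ this is exactly the integral in \cref{prop:sech-tanh-integral}. For $\varepsilon=1$ we use $\tanh^m t\,\cosh^{-2}t=\frac{1}{m+1}\frac{d}{dt}\tanh^{m+1}t$ and integrate by parts once. The boundary terms vanish because $\tanh^{m+1}t/t^n=O(t^{m+1-n})$ at $0$ and it is bounded at $\infty$. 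This gives
\[
\frac{n}{m+1}\int_0^\infty\frac{\tanh^{m+1}t}{t^{n+1}}\,dt ,
\]
to which the zeta-type formula applies.

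\textbf{Step 2 (coefficient conversion, $\varepsilon=0$).} For $q=2p-n\geq0$ we write $[z^m](1-z^2)^{(m-1)/2}(\arcsin z)^q$ as a contour integral $\frac1{2\pi i}\oint(\cdots)z^{-m-1}\,dz$ and substitute $z=\sin u$, $dz=\cos u\,du$, so that $(1-z^2)^{(m-1)/2}=\cos^{m-1}u$ near $0$. The result is
\[
\frac1{2\pi i}\oint u^{q}\,\frac{\cos^m u}{\sin^{m+1}u}\,du=[u^{m+n-2p}]\,\frac{u}{\sin u}(u\cot u)^m .
\]
The binomial $\binom{2p-1}{n-1}$ and the factor $2^{2p}\beta(2p)=\Phi(-1,2p,\tfrac12)$ then match directly via $(2)$.

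\textbf{Step 3 (coefficient conversion, $\varepsilon=1$).} With $q=2p-n+1$, the same substitution turns $[z^{m+1}](1-z^2)^{(m-1)/2}(\arcsin z)^{q}$ into
\[
\frac1{2\pi i}\oint u^q\,\frac{\cos^m u}{\sin^{m+2}u}\,du .
\]
Since $\frac{d}{du}\cot^{m+1}u=-(m+1)\cos^m u/\sin^{m+2}u$, one contour integration by parts gives
\[
\frac{q}{m+1}\,[u^{m+n-2p}](u\cot u)^{m+1}.
\]
It remains to compare constants. The zeta formula, multiplied by $n/(m+1)$, carries $\binom{2p}{n}\frac{n}{m+1}$, while our expression carries $\binom{2p}{n-1}\frac{2p-n+1}{m+1}$. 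These agree by the identity $\binom{2p}{n-1}(2p-n+1)=n\binom{2p}{n}$, and $(2^{2p+1}-1)\zeta(2p+1)=\Phi(1,2p+1,\tfrac12)$ by $(2)$. The summation range and the sign $(-1)^{(m-n)/2}$ carry over unchanged in both cases.

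\textbf{Main obstacle.} The delicate point is the bookkeeping in Step 3: the extra integration by parts, both on the real line and on the contour, and the resulting binomial identity. A secondary point is the legitimacy of $(1-z^2)^{(m-1)/2}$ when $m$ is even, where it is a half-integer power. This is handled by taking the principal branch near $z=0$, which equals $\cos^{m-1}u$ under $z=\sin u$ for small $u$.
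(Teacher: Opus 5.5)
Your proposal is correct and follows the paper's proof essentially step for step. It uses the substitution $x=\tanh t$, a single real integration by parts in the $\varepsilon=1$ case, the two $\tanh$-coefficient formulae, the change of variable $z=\sin u$ in Cauchy's formula with one contour integration by parts, and the identity $\binom{2p}{n-1}(2p-n+1)=n\binom{2p}{n}$. The only difference is cosmetic: you run the coefficient conversion from the $\arcsin$ side to the $u\cot u$ side, integrating by parts against $\cot^{m+1}u$, whereas the paper goes from $u$ to $z$, integrating by parts against $(\arcsin z)^{q+1}$; this is the same identity read in the opposite direction.
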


\begin{proof}
We derive the two cases separately and then combine them by means of
the Lerch transcendent.

\paragraph{The zeta-type identity.}
Set
\[
I_{m,n}
:=
\int_0^1
\frac{x^m}{\operatorname{arctanh}^n x}\,dx.
\]
With the substitution
\[
x=\tanh t,
\qquad
dx=\frac{dt}{\cosh^2 t},
\qquad
\operatorname{arctanh}x=t,
\]
we obtain
\[
I_{m,n}
=
\int_0^\infty
\frac{\tanh^m t}{t^n\cosh^2 t}\,dt.
\]
Since
\[
\frac{d}{dt}\tanh^{m+1}t
=
(m+1)\frac{\tanh^m t}{\cosh^2 t},
\]
integration by parts gives
\[
I_{m,n}
=
\frac{n}{m+1}
\int_0^\infty
\frac{\tanh^{m+1}t}{t^{n+1}}\,dt.
\]
Indeed, the boundary term vanishes at infinity, while at the origin
\[
t^{-n}\tanh^{m+1}t
=
O(t^{m+1-n})
\longrightarrow 0
\]
because $m\ge n$.

By the zeta-type tanh coefficient formula proved in
\cite[Prop.~3.4]{TallaWaffo2026},
\[
\begin{aligned}
\int_0^\infty
\frac{\tanh^{m+1}t}{t^{n+1}}\,dt
&=
(-1)^{(m-n)/2}
\sum_{p=\lceil n/2\rceil}^{(m+n)/2}
\binom{2p}{n}
\frac{(2^{2p+1}-1)\zeta(2p+1)}
{\pi^{2p}}
\\
&\qquad\times
[u^{m+n-2p}]
(u\cot u)^{m+1}.
\end{aligned}
\]
Hence
\[
\begin{aligned}
I_{m,n}
&=
\frac{n}{m+1}
(-1)^{(m-n)/2}
\sum_{p=\lceil n/2\rceil}^{(m+n)/2}
\binom{2p}{n}
\frac{(2^{2p+1}-1)\zeta(2p+1)}
{\pi^{2p}}
\\
&\qquad\times
[u^{m+n-2p}]
(u\cot u)^{m+1}.
\end{aligned}
\]

We now transform the coefficient. Put
\[
r:=m+n-2p,
\qquad
q:=2p-n.
\]
By Cauchy's coefficient formula,
\[
\begin{aligned}
[u^r](u\cot u)^{m+1}
&=
\frac{1}{2\pi i}
\oint
u^q\cot^{m+1}u\,du.
\end{aligned}
\]
Set
\[
z=\sin u,
\qquad
u=\arcsin z.
\]
Then
\[
du=\frac{dz}{\sqrt{1-z^2}},
\qquad
\cot u=\frac{\sqrt{1-z^2}}{z},
\]
and therefore
\[
[u^r](u\cot u)^{m+1}
=
\frac{1}{2\pi i}
\oint
\frac{
(\arcsin z)^q(1-z^2)^{m/2}
}{
z^{m+1}
}\,dz.
\]

Since
\[
d\!\left((\arcsin z)^{q+1}\right)
=
(q+1)
\frac{(\arcsin z)^q}{\sqrt{1-z^2}}\,dz,
\]
we may write
\[
\begin{aligned}
[u^r](u\cot u)^{m+1}
&=
\frac{1}{q+1}
\frac{1}{2\pi i}
\oint
\frac{(1-z^2)^{(m+1)/2}}{z^{m+1}}
\,d\!\left((\arcsin z)^{q+1}\right).
\end{aligned}
\]
Integration by parts on the closed contour yields
\[
\oint F\,dG=-\oint G\,dF.
\]
Moreover,
\[
\frac{d}{dz}
\left(
\frac{(1-z^2)^{(m+1)/2}}{z^{m+1}}
\right)
=
-(m+1)
\frac{(1-z^2)^{(m-1)/2}}{z^{m+2}}.
\]
Consequently,
\[
\begin{aligned}
[u^r](u\cot u)^{m+1}
&=
\frac{m+1}{q+1}
\frac{1}{2\pi i}
\oint
\frac{
(1-z^2)^{(m-1)/2}
(\arcsin z)^{q+1}
}{
z^{m+2}
}\,dz
\\
&=
\frac{m+1}{q+1}
[z^{m+1}]
(1-z^2)^{(m-1)/2}
(\arcsin z)^{q+1}.
\end{aligned}
\]
Since $q=2p-n$, this becomes
\[
\boxed{
[u^{m+n-2p}](u\cot u)^{m+1}
=
\frac{m+1}{2p-n+1}
[z^{m+1}]
(1-z^2)^{(m-1)/2}
(\arcsin z)^{2p-n+1}.
}
\]
Substitution into the preceding formula gives
\[
\begin{aligned}
I_{m,n}
&=
(-1)^{(m-n)/2}
\sum_{p=\lceil n/2\rceil}^{(m+n)/2}
\frac{n}{2p-n+1}
\binom{2p}{n}
\frac{(2^{2p+1}-1)\zeta(2p+1)}
{\pi^{2p}}
\\
&\qquad\times
[z^{m+1}]
(1-z^2)^{(m-1)/2}
(\arcsin z)^{2p-n+1}.
\end{aligned}
\]
Using
\[
\frac{n}{2p-n+1}\binom{2p}{n}
=
\binom{2p}{n-1},
\]
we obtain the zeta-type formula
\begin{equation}
\label{eq:arctanh-zeta-type}
\boxed{
\begin{aligned}
\int_0^1
\frac{x^m}{\operatorname{arctanh}^n x}\,dx
&=
(-1)^{(m-n)/2}
\sum_{p=\lceil n/2\rceil}^{(m+n)/2}
\binom{2p}{n-1}
\frac{(2^{2p+1}-1)\zeta(2p+1)}
{\pi^{2p}}
\\
&\qquad\times
[z^{m+1}]
(1-z^2)^{(m-1)/2}
(\arcsin z)^{2p-n+1}.
\end{aligned}
}
\end{equation}

\paragraph{The Dirichlet-beta identity.}
Now set
\[
J_{m,n}
:=
\int_0^1
\frac{x^m}
{\sqrt{1-x^2}\operatorname{arctanh}^n x}\,dx.
\]
Again using $x=\tanh t$, we have
\[
\sqrt{1-x^2}
=
\frac{1}{\cosh t},
\qquad
dx=\frac{dt}{\cosh^2 t},
\]
and hence
\[
J_{m,n}
=
\int_0^\infty
\frac{\tanh^m t}{t^n\cosh t}\,dt.
\]
By \cref{prop:sech-tanh-integral},
\[
\begin{aligned}
J_{m,n}
&=
(-1)^{(m-n)/2}
\sum_{p=\lceil n/2\rceil}^{(m+n)/2}
2^{2p}
\binom{2p-1}{n-1}
\frac{\beta(2p)}{\pi^{2p-1}}
\\
&\qquad\times
[u^{m+n-2p}]
\frac{u}{\sin u}(u\cot u)^m.
\end{aligned}
\]

As above, put
\[
r=m+n-2p,
\qquad
q=2p-n.
\]
Cauchy's coefficient formula gives
\[
\begin{aligned}
[u^r]
\frac{u}{\sin u}(u\cot u)^m
&=
\frac{1}{2\pi i}
\oint
u^q
\frac{\cos^m u}{\sin^{m+1}u}\,du.
\end{aligned}
\]
With $z=\sin u$ this becomes
\[
\begin{aligned}
[u^r]
\frac{u}{\sin u}(u\cot u)^m
&=
\frac{1}{2\pi i}
\oint
\frac{
(\arcsin z)^q
(1-z^2)^{(m-1)/2}
}{
z^{m+1}
}\,dz
\\
&=
[z^m]
(1-z^2)^{(m-1)/2}
(\arcsin z)^q.
\end{aligned}
\]
Thus
\begin{equation}
\label{eq:arctanh-beta-type}
\boxed{
\begin{aligned}
\int_0^1
\frac{x^m}
{\sqrt{1-x^2}\operatorname{arctanh}^n x}\,dx
&=
(-1)^{(m-n)/2}
\sum_{p=\lceil n/2\rceil}^{(m+n)/2}
\binom{2p-1}{n-1}
\frac{2^{2p}\beta(2p)}
{\pi^{2p-1}}
\\
&\qquad\times
[z^m]
(1-z^2)^{(m-1)/2}
(\arcsin z)^{2p-n}.
\end{aligned}
}
\end{equation}

\paragraph{Lerch unification.}
For the Lerch transcendent,
\[
\Phi\!\left(-1,s,\frac12\right)=2^s\beta(s),
\qquad
\Phi\!\left(1,s,\frac12\right)=(2^s-1)\zeta(s).
\]
Therefore, for \(\varepsilon\in\{0,1\}\),
\[
\Phi\!\left(
2\varepsilon-1,\,
2p+\varepsilon,\,
\frac12
\right)
=
\begin{cases}
2^{2p}\beta(2p),&\varepsilon=0,\\[1mm]
(2^{2p+1}-1)\zeta(2p+1),&\varepsilon=1.
\end{cases}
\]
For \(\varepsilon=0\), the remaining factors in the asserted formula
become
\[
\binom{2p-1}{n-1},\qquad
[z^m](1-z^2)^{(m-1)/2}(\arcsin z)^{2p-n},
\]
which reproduces \eqref{eq:arctanh-beta-type}.  For
\(\varepsilon=1\), they become
\[
\binom{2p}{n-1},\qquad
[z^{m+1}](1-z^2)^{(m-1)/2}(\arcsin z)^{2p-n+1},
\]
which reproduces \eqref{eq:arctanh-zeta-type}.  This proves the unified
formula with the same binary convention as in \((2)\).
\end{proof}

\section*{Acknowledgments}

The author acknowledges the use of an AI language model for language and presentation assistance and for exploratory numerical checks.

\printbibliography

\end{document}

%% file: preamble.tex
\usepackage[margin=0.8in]{geometry}
\usepackage{amsmath, amssymb, amsfonts, mathtools}
\usepackage{amsthm}
\usepackage{mathrsfs}
\usepackage{stmaryrd}
\usepackage{esint}
\usepackage{eqnalign}

\usepackage{graphicx}
\usepackage{enumitem}
\usepackage[most]{tcolorbox}
\usepackage{float}
\usepackage{listings}
\usepackage{array}
\usepackage{booktabs}
\usepackage{xcolor}
\usepackage{emptypage}
\usepackage{tensor}
\usepackage{tabularx}

\usepackage{fancyhdr}
\usepackage{hyperref}
\usepackage[nameinlink,noabbrev]{cleveref}

\usepackage{titlesec}
\usepackage[T1]{fontenc}
\usepackage[utf8]{inputenc}
\usepackage{lmodern}
\usepackage{titling}

\usepackage[backend=biber,style=numeric]{biblatex}
\providecommand{\HeadTitle}{} % <-- TITRE VARIABLE
\providecommand{\HeadTitleTwo}{} % <-- TITRE VARIABLE
\providecommand{\HeadAuthor}{Luc Ramsès TALLA WAFFO} % <-- FIXE

\titleformat{\section}[block]
  {\normalfont\large\bfseries\itshape\centering}
  {§\thesection.}
  {1em}
  {}

\titleformat{\subsection}
  {\normalfont\normalsize\bfseries}
  {\thesubsection}
  {1em}
  {}

\newtheorem{theorem}{Theorem}[section]
\newtheorem{lemma}[theorem]{Lemma}
\newtheorem{proposition}[theorem]{Proposition}

\newtheorem{example}[theorem]{Example}

\crefname{example}{example}{examples}
\Crefname{example}{Example}{Examples}

\crefname{corollary}{corollary}{corollaries}
\Crefname{corollary}{Corollary}{Corollaries}

\crefname{definition}{definition}{definitions}
\Crefname{definition}{Definition}{Definitions}

\crefname{remark}{remark}{remarks}
\Crefname{remark}{Remark}{Remarks}

\crefname{conjecture}{conjecture}{conjectures}
\Crefname{conjecture}{Conjecture}{Conjectures}

\crefname{lemma}{lemma}{lemmas}
\Crefname{lemma}{Lemma}{Lemmas}

\crefname{proposition}{proposition}{propositions}
\Crefname{proposition}{Proposition}{Propositions}

\crefname{theorem}{theorem}{theorems}
\Crefname{theorem}{Theorem}{Theorems}

\numberwithin{equation}{section}

%% file: font.tex
\usepackage{fontspec}

%% file: titlepage.tex
\thispagestyle{fancy}

\vspace{0.2cm}

\begin{center}
\Large{\HeadTitleTwo}
\end{center}

\hspace{3cm}

\begin{center}
Luc Ramsès TALLA WAFFO \\
Technische Universität Darmstadt\\
Karolinenplatz 5, 64289 Darmstadt, Germany\\
ramses.talla@stud.tu-darmstadt.de\\
\vspace{0.5cm}
\today
\end{center}